    \def\tank#1{\protected@xdef\@thanks{\@thanks
     \protect\footnotetext[0]{#1}}}
    \def\bigfoot{

     \@footnotetext}
    \renewcommand{\theequation}
    {\arabic{section}.\arabic{equation}}
    \newcommand{\ea}{\end{array}}
\DeclareMathOperator{\Var}{Var}
\numberwithin{equation}{section}
\newtheorem{theorem}{Theorem}[section]
\newtheorem{lemma}{Lemma}[section]
\newtheorem{proposition}[theorem]{Proposition}
\newtheorem{definition}[theorem]{Definition}
\newtheorem{remark}{Remark}[section]
\def\beq{\begin{equation}}
\def\nneq{\end{equation}}
\def\bthm{\begin{theorem}}
\def\nthm{\end{theorem}}
\def\blem{\begin{lemma}}
\def\nlem{\end{lemma}}
\def\bprf{\begin{proof}}
\def\nprf{\end{proof}}
\def\bprop{\begin{prop}}
\def\nprop{\end{prop}}
\def\brmk{\begin{rem}}
\def\nrmk{\end{rem}}
\def\bexa{\begin{exa}}
\def\nexa{\end{exa}}
\def\bcor{\begin{cor}}
\def\ncor{\end{cor}}
\def\RR{\mathbb{R}}
\def\EE{\mathbb{E}}
\def\bT{\mathbb{T}}
\def\bL{\mathbb{L}}
\def\cA{\mathcal{A}}
\def\cF{\mathcal{F}}
\def\cD{\mathcal{D}}
\def\LL{[-L,L]}
\newcommand\bp{\mathbb{P}}
\newcommand\diam { {\rm{diam\,}}}
\def\ee{{\mathbb E}}
\def\FF{\mathcal {F}}
\newcommand{\Blk}{\Big[}
\newcommand{\Brk}{\Big]}
\newcommand{\lc}{\left(}
\newcommand{\rc}{\right)}
\newcommand{\lk}{\left[}
\newcommand{\rk}{\right]}
\newcommand{\lt}{\left }
\newcommand{\rt}{\right}
\title[Growth rate for the  H\"older coefficients  of FSHE]{Growth rates for the  H\"older coefficients of   the linear stochastic fractional  heat equation  with  rough dependence in space}
    \author[C. Liu]{Chang Liu}
    \address[]{Chang Liu, School of Mathematics and Statistics,  Wuhan University,  Wuhan, 430072,
    China.}
    \email{changliu0504@163.com}
    \author[B. Qian]{Bin Qian}
    \address[]{Bin Qian, Department of Mathematics and statistics, Suzhou University of Technology, Changshu, Jiangsu 215500,   China.}
    \email{binqiancn@126.com}
    \author[R. Wang]{Ran Wang}
    \address[]{Ran Wang, School of Mathematics and Statistics,  Wuhan University,  Wuhan, 430072,
    China.}
    \email{rwang@whu.edu.cn}
    \date{}
\begin{document}
    \maketitle

     \noindent {\bf Abstract:} 
We study the linear stochastic fractional heat equation
$$
  \frac{\partial}{\partial t}u(t,x)=-(-\Delta)^{\frac{\alpha}2}u (t,x)+\dot{W}(t,x),\ \ t> 0,\  x\in\RR,
$$
where $-(-\Delta)^{\frac{\alpha}{2}}$ denotes the fractional Laplacian with  $\alpha\in (1, 2)$.  The driving noise $\dot W$ is a centered Gaussian field that is white in time and  has the covariance of a fractional Brownian motion with Hurst parameter $H\in\left(\frac {2-\alpha}2,\frac 12\right)$. We establish exact asymptotics for the solution as  both temporal and spatial variables tend to infinity,  and derive sharp growth rates for the H\"older coefficients. The proofs are based on Talagrand's majorizing measure theorem and Sudakov's minoration theorem.

     \vskip0.3cm
 \noindent{\bf Keyword:} {Stochastic fractional heat equation; rough noise; global H\"older continuity; majorizing measure theorem.}
 \vskip0.3cm

\noindent {\bf MSC: } {60H15; 60G17; 60G22.}

\section{Introduction and main results}
%%%%%%%%%%%%%%%%%%%%%%%%%%%%%%%%%%%%%%%%%%%%%%%%%%%%%%%%%%%%
Consider the  following  linear stochastic fractional heat equation (SFHE, for short):
\begin{equation}\label{SFHE}
\begin{cases}
\frac{\partial}{\partial t}u(t,x) =-(-\Delta)^{\frac{\alpha}2}u(t,x)+ \dot{W}(t,x),\ \ t>0,\ x\in\RR, \\
u(0,x)\equiv 0.
\end{cases}
\end{equation}
Here, $-(-\Delta)^{\frac{\alpha}{2}}$ denotes the fractional Laplacian with   $\alpha\in (1,2)$,   $W(t,x)$ is a centered Gaussian field with the covariance given by
\begin{equation}\label{CovW}
\ee[W(t,x)W(s,y)]=\frac 12 \left(s\wedge t\right)\left( |x|^{2H}+|y|^{2H}-|x-y|^{2H} \right),
\end{equation}
for some $H\in\left(0, \frac12\right)$. Equivalently, the covariance of the noise $\dot{W}(t,x)=\frac{\partial^2 W}{\partial t\partial x}$ is
$$
\ee\left[\dot{W}(t,x)\dot{W}(s,y)\right]=\delta_0 (t-s)\Lambda\left(x-y\right),
$$
where $\Lambda$ is a distribution, whose Fourier transform is the measure $\mu(d\xi)=c_{H}|\xi|^{1-2H}d\xi$ with
\begin{align}
c_{H}= \,\frac{1}{2\pi}\Gamma(2H+1)\sin(\pi H).   \label{e.c1}
\end{align}

When $\alpha=2$ and $H=\frac{1}{2}$, Equation~\eqref{SFHE} reduces to the classical stochastic heat equation (SHE, for short) driven by space-time white noise. In this case, it is well-known that the stochastic heat equation admits a unique mild solution \cite{D1999,Walsh1986}.  
The study of stochastic partial differential equations  driven by fractional Brownian motion (fBm, for short) or other general Gaussian noises   has developed rapidly in recent years. For an overview, see, e.g., \cite{BT2008,HJQ19,HLT2019,LHW2022,LHW2023,LM22,T2014,B.RM2012,GSS24} and the references.
Among these works, Balan and Tudor \cite{BT2008} gave necessary and sufficient conditions on the Hurst index for the existence of solutions to the stochastic heat equation driven by the following Gaussian noises:
\begin{itemize}
\item fractional noise in time with Hurst index $H \in (1/2,1)$;
\item spatially correlated noise with covariance given by Riesz, Bessel, or Poisson kernel functions.
\end{itemize}
Herrell, Song, Wu, and Xiao \cite{HSWX2020} investigated the sample path regularity of solutions to the stochastic heat equation driven by fractional-colored Gaussian noise.

The global spatial behavior of  solutions to the SHE driven by space-time white noise and colored noise
has been studied by  Conus, Joseph, and Khoshnevisan  \cite[Theorem 1.2]{CJK2013} and by Conus, Joseph, Khoshnevisan, and Shiu   \cite[Theorem 2.3]{CJKS2013}. Recently, Hu and Wang \cite{HW2022} extended these results to spatial rough noise with covariance structure given by \eqref{CovW}. Specifically, for the SHE driven by Gaussian noise that is white in time and spatially fractional with Hurst index $H \in (\frac{1}{4}, \frac{1}{2})$, they established the  precise asymptotic behavior of solutions as both temporal and spatial variables tend to infinity. Furthermore, Hu and Wang \cite{HW2022} established sharp estimates for the domain-size dependence of the H\"older coefficients. Very recently, Liu, Hu, and Wang \cite{LHW2023} investigated analogous properties for solutions to the stochastic wave equation in arbitrary spatial dimensions, considering additive Gaussian noise with both temporal and spatial fractional structure.

In the present work, we extend the research program initiated in \cite{HW2022} to the fractional Laplacian framework for Equation \eqref{SFHE}. Specifically, our investigation focuses on two  aspects:
\begin{itemize}
\item the asymptotic behavior of solutions to Equation \eqref{SFHE};
\item the sharp growth rates of the H\"older regularity coefficients.
\end{itemize}

We introduce the following key function that characterizes the space-time scaling in our analysis:
\begin{equation}\label{Thm3.1-2}
\Psi(t, L) := 1 + \sqrt{\log_2\left( \frac{L}{t^{1/\alpha}} \vee 1 \right)}, \quad t > 0, \ L > 0.
\end{equation}
This function plays a critical role throughout our work, particularly in establishing the precise relationship between spatial and temporal variables.

First, we estimate sharp bounds for  $\mathbb E\left[\sup_{0\leq t\leq T, -L\leq x\leq  L}u(t,x)\right]$ and $\mathbb E\left[\sup_{-L\leq x\leq L}u(t,x)\right]$  as $T$ and  $L$ tend to infinity.

\begin{theorem}\label{Thm3.1}
\begin{enumerate}
\item[(a)]
There exist some positive constants $c_{1,1}$ and $c_{1,2}$  such that  for any $T$ and $L>0$,
  \begin{align}\label{Thm3.1-1}
c_{1,1} T^{\frac{2H+\alpha-2}{2\alpha}}\Psi(T,L) \le
&\, \EE \lk\sup_{\substack{0\le t\le T, \\ -L\le x\le L}}  u(t,x) \rk\le c_{1,2} T^{\frac{2H+\alpha-2}{2\alpha}}\Psi(T,L).
  \end{align}
\item[(b)]
There exist some  positive constants $c_{1,3}$ and $c_{1,4}$  such that for any $L\geq t^{\frac{1}{\alpha}}>0$,
\begin{equation}\label{Thm3.1-5}
c_{1,3} t^{\frac{2H+\alpha-2}{2\alpha}}\Psi(t,L)\leq
\EE\left[\sup_{-L\leq x\leq L} u(t,x)\right]\leq c_{1,4} t^{\frac{2H+\alpha-2}{2\alpha}}\Psi(t, L).
\end{equation}
\end{enumerate}
\end{theorem}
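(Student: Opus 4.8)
The entire theorem is a statement about the geometry of the centered Gaussian field $u(t,x)=\int_0^t\!\int_\RR G_{t-s}(x-y)\,W(ds,dy)$ under its canonical metric $d((t,x),(s,y)):=\|u(t,x)-u(s,y)\|_{L^2(\Omega)}$: Talagrand's majorizing measure theorem supplies the sharp upper bound via $\EE[\sup u]\es\gamma_2(\mathcal T,d)$ over the index set $\mathcal T$, while Sudakov's minoration supplies the matching lower bound $\EE[\sup u]\gs\sup_\epsilon\epsilon\sqrt{\log N(\mathcal T,d,\epsilon)}$ (throughout, $\es$ denotes comparability up to multiplicative constants). I would first record the one-point scale. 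Using the spectral representation and $\widehat G_\tau(\xi)=e^{-\tau|\xi|^\alpha}$,
\[ \EE[u(t,x)^2]=c_{1,H}\int_0^t\!\!\int_\RR e^{-2r|\xi|^\alpha}|\xi|^{1-2H}\,d\xi\,dr, \]
and the substitution $\xi\mapsto(2r)^{-1/\alpha}\xi$ reduces the inner integral to $c\,r^{-(2-2H)/\alpha}$; the remaining $r$-integral converges at $r=0$ precisely because $H>\tfrac{2-\alpha}2$, yielding $\EE[u(t,x)^2]\es t^{(2H+\alpha-2)/\alpha}$. Writing $\beta:=\tfrac{2H+\alpha-2}2\in(0,\tfrac{\alpha-1}2)$ and $\sigma(t):=\|u(t,x)\|_{L^2}\es t^{\beta/\alpha}$ identifies the prefactor $T^{(2H+\alpha-2)/(2\alpha)}=\sigma(T)$ appearing in both parts.

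Next I would compute the increments. Carrying out the $r$-integral first puts the spatial increment in closed spectral form,
\[ \EE[(u(t,x)-u(t,y))^2]=c_{1,H}\int_\RR\frac{(1-e^{-2t|\xi|^\alpha})\,(1-\cos((x-y)\xi))}{|\xi|^{\alpha+2H-1}}\,d\xi, \]
and the rescaling $\xi\mapsto\xi/|x-y|$ exhibits the dichotomy that drives the theorem: for $|x-y|\ls t^{1/\alpha}$ the integral is comparable to $|x-y|^{2\beta}$ (an fBm-type local behaviour of index $\beta$, whose limiting constant converges again exactly when $H>\tfrac{2-\alpha}2$), whereas for $|x-y|\gs t^{1/\alpha}$ it saturates at $\es\sigma(t)^2$ because the spatial correlation decays (Riemann–Lebesgue). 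Equivalently $d((t,x),(t,y))\es(\min\{|x-y|,t^{1/\alpha}\})^\beta$. An analogous but simpler spectral computation gives $d((t,x),(s,x))\ls|t-s|^{\beta/\alpha}$ uniformly (and $\es$ generically), so temporally the field is fBm-like of index $\beta/\alpha$ and does not decorrelate; this is why the time direction will contribute the scale $T^{\beta/\alpha}$ but no logarithm. The spatial saturation at the time-dependent length $t^{1/\alpha}$ is the single most important structural fact.

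For part (b) the index set is the stationary slice $x\mapsto u(t,x)$ with the metric just described. For the upper bound I would run Dudley's entropy integral: the covering number is $N(\epsilon)\es L\,\epsilon^{-1/\beta}$ for $\epsilon\le\sigma(t)$ and $O(1)$ above, so with $\epsilon=\sigma(t)u$ and $M:=L/t^{1/\alpha}=L\,\sigma(t)^{-1/\beta}$,
\[ \int_0^{\sigma(t)}\!\!\sqrt{\log N(\epsilon)}\,d\epsilon\ls\sigma(t)\int_0^1\sqrt{\log M+\tfrac1\beta\log\tfrac1u}\,du\ls\sigma(t)\big(1+\sqrt{\log M}\big)=\sigma(t)\,\Psi(t,L), \]
using subadditivity of $\sqrt{\cdot}$ and $\int_0^1\sqrt{\log(1/u)}\,du<\infty$. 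For the lower bound I would apply Sudakov at the threshold $\epsilon\es\sigma(t)$: points spaced $t^{1/\alpha}$ apart in $\LL$ form a packing of cardinality $\es M$ that is pairwise $d$-separated by $\es\sigma(t)$, whence $\EE[\sup]\gs\sigma(t)\sqrt{\log M}$; together with the trivial $\EE[\sup]\gs\sigma(t)$ (two points) this gives $\gs\sigma(t)\Psi(t,L)$. The hypothesis $L\ge t^{1/\alpha}$, i.e. $M\ge1$, is exactly what makes this packing nontrivial.

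Part (a) over $\mathcal T=[0,T]\times\LL$ fuses the two mechanisms. The lower bound is immediate: restricting to the slice $t=T$ and invoking part (b) (when $L\ge T^{1/\alpha}$), or using the two points $(0,0),(T,0)$ together with $u(0,\cdot)\equiv0$ (when $L<T^{1/\alpha}$, where $\Psi(T,L)=1$), yields $\EE[\sup_{\mathcal T}u]\gs\sigma(T)\Psi(T,L)$ in every regime. The upper bound is the crux and the main obstacle. A Dudley integral over the genuinely two-parameter, anisotropic metric (index $\beta$ in space, $\beta/\alpha$ in time) with a saturation length $t^{1/\alpha}$ that itself varies with the time coordinate is expected to be lossy, so I would instead estimate $\gamma_2(\mathcal T,d)$ directly. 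The clean route is a dyadic time decomposition $[0,T]=\bigcup_k(2^{-k-1}T,2^{-k}T]$: on the $k$-th block the variance is $\es2^{-2k\beta/\alpha}\sigma(T)^2$ while the spatial correlation length shrinks to $\es2^{-k/\alpha}T^{1/\alpha}$, so the block carries $\es2^{k/\alpha}M$ decorrelated spatial cells, each contributing at level $\es2^{-k\beta/\alpha}\sigma(T)\sqrt{\log(2^{k/\alpha}M)}$; assembling these into an admissible partition sequence and summing, the geometric factor $2^{-k\beta/\alpha}$ dominates the $\sqrt{k}$ growth, so the series is controlled by its top term $k=0$, reproducing $\sigma(T)\sqrt{\log M}$, with the overall temporal fluctuation of scale $\sigma(T)$ supplying the additive $1$ in $\Psi$. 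The delicate point—and where Talagrand's theorem rather than a naive entropy bound is essential—is to verify that the simultaneous space–time increments chain across these dyadic scales without inflating the estimate beyond $\sigma(T)\Psi(T,L)$; equivalently, that one is never forced to discard the spatial saturation, which would replace the logarithm by the far larger power $L^\beta$.
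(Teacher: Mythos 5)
Your overall architecture is the same as the paper's: establish that the canonical metric satisfies $d_1((t,x),(s,y))\asymp |x-y|^{\beta}\wedge(t\wedge s)^{\beta/\alpha}+|t-s|^{\beta/\alpha}$ with $\beta=\frac{2H+\alpha-2}{2}$ (the paper's Lemma \ref{metric}), then get the upper bound from Talagrand's theorem and the lower bound from Sudakov's minoration with gridpoints spaced $t^{1/\alpha}$ apart. Your variance computation, both increment estimates, and both lower bounds (including the two-point argument in the regime $L<T^{1/\alpha}$, where $\Psi=1$) coincide with the paper's. Your part (b) upper bound via Dudley's entropy integral is correct and is in fact equivalent to what the paper does: the paper's ``application of Talagrand'' is a single \emph{uniform} admissible partition sequence (dyadic cells of cardinality $2^{2^{n-1}}$ in time and $2^{2^{n-2}+1}$ in space at level $n$), and bounding $\sum_n 2^{n/2}\sup_{(t,x)}\diam(A_n(t,x))$ with a uniform diameter bound is precisely the entropy bound. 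For this reason your stated motivation for avoiding Dudley in part (a) --- that the entropy integral over the anisotropic two-parameter set would be lossy --- is mistaken: the spatial saturation at scale $T^{\beta/\alpha}$ keeps the covering numbers small enough that the uniform-partition/entropy bound is already sharp, and this is exactly how the paper proves the part (a) upper bound (splitting the sum at the level $N_0$ where the spatial mesh reaches $T^{1/\alpha}$, so that $2^{N_0/2}\asymp\Psi(T,L)$ and the tail converges doubly exponentially).

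The one place where your argument is not complete is the part (a) upper bound as you have organized it. Decomposing $[0,T]$ into dyadic time blocks and estimating each block separately leaves you with the problem of combining countably many suprema of a centered Gaussian field: $\EE[\sup_{\cup_k T_k}u]$ is not bounded by $\sum_k\EE[\sup_{T_k}u]$, and the correct gluing (either a weighted union bound using the decaying block variances $2^{-2k\beta/\alpha}\sigma(T)^2$, or the explicit construction of one admissible partition sequence refining all blocks simultaneously) is precisely the step you flag as ``delicate'' and do not carry out. The paper sidesteps this entirely by building a single product partition of $[0,T]\times[-L,L]$ from the start and bounding one sum; if you adopt that organization, your estimates already contain everything needed and the proof closes.
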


Next, we investigate the asymptotic behavior of the solution  under the following three  scaling regimes:
\begin{itemize}
\item[$(1)$] temporal and spatial domain size $T, L\rightarrow\infty$;
\item[$(2)$] spatial domain size $L\rightarrow\infty$;
\item[$(3)$] spatial variable  $|x|\rightarrow\infty$.
\end{itemize}

For any $T>0$ and $\delta>0$, let
\begin{equation}\label{eq Up}
    \Upsilon(T,  \delta):= [0,T]\times \left[-T^{\frac{1+\delta}{\alpha}},T^{\frac{1+\delta}{\alpha}}\right].
\end{equation}
We have the following asymptotics results. 
\begin{proposition}\label{cor1}
\begin{enumerate}
\item[(a)]    For any $\delta>0$, 
\begin{equation}\label{Thm3.1-3}
\begin{split}
c_{1,1}\sqrt{\frac{\delta}{\alpha}}&\, \le \liminf_{T\rightarrow\infty}\frac {  \sup_{(t,x)\in\Upsilon(T, \delta)} u(t,x) }    {T^{\frac{2H+\alpha-2}{2\alpha}} \sqrt{\log_2 T}  }\\
& \le  \limsup_{T\rightarrow\infty}\frac {  \sup_{(t,x)\in\Upsilon(T, \delta)} u(t,x) }    {T^{\frac{2H+\alpha-2}{2\alpha}} \sqrt{\log_2 T}  }\le c_{1,2}\sqrt{\frac{\delta}{\alpha}},\ \ \text{a.s.}
\end{split}
\end{equation}
\item[(b)] For any $t>0$,
\begin{equation}\label{cor-1}
\begin{split} 
c_{1,3}t^{\frac{2H+\alpha-2}{2\alpha}}
\leq&\, \liminf_{L\rightarrow\infty}\frac{\sup_{x\in[-L,L]}u(t,x)}{\sqrt{\log_2 L}} \\
\leq&\, \limsup_{L\rightarrow\infty}\frac{\sup_{x\in[-L,L]}u(t,x)}{\sqrt{\log_2 L}}
\leq c_{1,4}t^{\frac{2H+\alpha-2}{2\alpha}},\ \ \text{a.s.}
\end{split}
\end{equation}
\item[(c)]
For any $t>0$,
\begin{equation}\label{u/log}
\limsup_{|x|\rightarrow\infty}\frac{u(t,x)}{\sqrt{\log |x|}} =\sqrt{2c_{\alpha, H }}t^{\frac{2H+\alpha-2}{2\alpha}},\ \ \text{a.s.},
\end{equation}
where 
\begin{equation}\label{eq c a H}
c_{\alpha, H} :=\frac{c_{H}} {2H+\alpha-2}  2^{ \frac{2H+\alpha-2}{\alpha}}\Gamma\left(\frac{2-2H}{\alpha}\right).
\end{equation}
\end{enumerate}
Here, the constants $c_{1,i}, i=1, \cdots, 4$ are the constants given in   Theorem \ref{Thm3.1}.
\end{proposition}

\begin{remark}
 Proposition~\ref{cor1}(b) extends Theorem 1.2 of \cite{CJK2013}, which concerned the SHE  with space-time white noise,  to the more general case of the SFHE driven by spatially rough noise.  Proposition~\ref{cor1}(c) further unifies Theorem 2.3 of \cite{CJKS2013} and Equation (6.3)  of  \cite{KKX2017} within this same rough-noise framework. 
 For example, when $\alpha = 2$ and $H=1/2$,  \eqref{u/log} recovers the following  result of Khoshnevisan, Kim, and Xiao \cite[(6.3)]{KKX2017}:
\[
\limsup_{|x|\rightarrow\infty}\frac{u(t,x)}{\sqrt{\log |x|}} =\sqrt{2}\left(\frac{t}{\pi}\right)^{\frac14}, \quad \text{a.s.}
\] 
 
\end{remark}

Next, we investigate the precise domain-size dependence of the spatial H\"older regularity coefficient.
Denote
\beq\label{def:h:u}
\Delta_h u(t,x):=u(t,x+h)-u(t,x).
\nneq
\begin{theorem}\label{Thm3.2}
For any  $0\leq\theta\leq\frac{2H+\alpha-2}{2}$,
there exist some positive constants $c_{1,5}$ and $c_{1,6}$ such that
\beq\label{Thm3.2-1}
c_{1,5}|h|^{\frac{2H+\alpha-2}{2}}\Psi(t, L)
\le \, \EE \lk\sup_{-L\le x\le L} \Delta_h u(t,x) \rk\le c_{1,6}t^{\frac{2H+\alpha-2-2\theta}{2\alpha}}|h|^{\theta}\Psi(t, L),
\nneq
for all $L\geq t^{\frac{1}{\alpha}}$ and $0<|h|\le   \frac3{64} t^{\frac{1}{\alpha}}$.
\end{theorem}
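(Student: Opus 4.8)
The plan is to regard, for fixed $t$ and $h$, the spatial field $x\mapsto\Delta_h u(t,x)$ as a centered stationary Gaussian process on $[-L,L]$ and to control its expected supremum through the canonical metric
\[
d_h(x,y):=\bigl\|\Delta_h u(t,x)-\Delta_h u(t,y)\bigr\|_{L^2(\Omega)}.
\]
Writing $\beta:=\frac{2H+\alpha-2}{2}\in(0,\frac{1}{2})$ and inserting the spectral representation of the mild solution, $d_h(x,y)^2$ equals
\[
c_{1,H}\int_{\RR}\frac{1-e^{-2t|\xi|^\alpha}}{2|\xi|^\alpha}\,\bigl|e^{i\xi h}-1\bigr|^2\,\bigl|e^{i\xi(x-y)}-1\bigr|^2\,|\xi|^{1-2H}\,d\xi,
\]
a doubly oscillatory integral governed by the three frequency scales $t^{-1/\alpha}$, $|h|^{-1}$ and $|x-y|^{-1}$; the hypothesis $|h|\le\frac{3}{64}t^{1/\alpha}$ guarantees $|h|^{-1}\gtrsim t^{-1/\alpha}$, so that the $h$-increment sits inside the time-regularized band. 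Everything reduces to sharp two-sided control of $d_h$, after which Talagrand's majorizing measure theorem and Sudakov's minoration produce the two inequalities.

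For the upper bound I first record the single-increment variance $\sigma_a^2:=\EE[(\Delta_a u(t,x))^2]\asymp a^{2\beta}$ for $0<a\lesssim t^{1/\alpha}$ (the scaling underlying Theorem~\ref{Thm3.1}). Since the second difference is symmetric in $h$ and $x-y$, the triangle inequality in $L^2(\Omega)$ gives $d_h(x,y)\le 2\min\{\sigma_{|h|},\sigma_{|x-y|}\}\lesssim\min\{|h|,|x-y|\}^{\beta}$, and a short case analysis upgrades this, for every $0\le\theta\le\beta$, to
\[
d_h(x,y)\lesssim |h|^{\theta}\,\bigl(\min\{|x-y|,t^{1/\alpha}\}\bigr)^{\beta-\theta}=:\rho(x,y),
\]
a power-law metric that saturates precisely at the diffusive scale $t^{1/\alpha}$. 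I then feed $\rho$ into the majorizing measure theorem—equivalently Dudley's entropy bound, which is legitimate because $d_h\le\rho$ forces $N([-L,L],d_h,\epsilon)\le N([-L,L],\rho,\epsilon)$: a $\rho$-ball of radius $\epsilon<\mathrm{diam}_\rho=|h|^{\theta}t^{(\beta-\theta)/\alpha}$ has Euclidean radius $(\epsilon/|h|^{\theta})^{1/(\beta-\theta)}$, so that $\log N(\epsilon)\asymp\log\frac{L}{t^{1/\alpha}}+\frac{1}{\beta-\theta}\log\frac{\mathrm{diam}_\rho}{\epsilon}$. Integrating $\sqrt{\log N(\epsilon)}$ from $0$ to $\mathrm{diam}_\rho$ yields exactly $|h|^{\theta}t^{(\beta-\theta)/\alpha}\,\Psi(t,L)$, with a constant that depends on $\theta$ through $(\beta-\theta)^{-1/2}$; this is the asserted right-hand side.

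For the lower bound I need the reverse inequality only at the diffusive scale, namely $d_h(x,y)^2\gtrsim|h|^{2\beta}$ whenever $|x-y|\ge t^{1/\alpha}$. I would obtain it by discarding all frequencies outside the band $|\xi|\in[\frac{\pi}{2|h|},\frac{3\pi}{2|h|}]$: there $|e^{i\xi h}-1|^2\gtrsim 1$, the relation $|\xi|\sim|h|^{-1}$ together with $|h|\lesssim t^{1/\alpha}$ forces $1-e^{-2t|\xi|^\alpha}\gtrsim 1$, while $|x-y|\ge t^{1/\alpha}\gg|h|$ makes $\xi\mapsto|e^{i\xi(x-y)}-1|^2$ sweep through many periods, so that its integral over the band is $\gtrsim|h|^{-1}$; collecting powers of $|h|$ gives the claim. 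With this decorrelation estimate I place $N\asymp L/t^{1/\alpha}$ points on $[-L,L]$ spaced $t^{1/\alpha}$ apart; they are pairwise $d_h$-separated by $\gtrsim|h|^{\beta}$, so Sudakov's minoration yields $\EE[\sup\Delta_h u]\gtrsim|h|^{\beta}\sqrt{\log N}\asymp|h|^{\beta}\sqrt{\log_2(L/t^{1/\alpha})}$, and the two-point estimate $\EE[\sup\Delta_h u]\gtrsim d_h(x_0,x_1)\asymp|h|^{\beta}$ supplies the additive constant in $\Psi(t,L)$, completing the left-hand inequality.

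The main obstacle is the sharp treatment of the doubly oscillatory spectral integral, above all the decorrelation lower bound at scale $t^{1/\alpha}$: one must show that the single band $|\xi|\sim|h|^{-1}$ already carries a definite fraction of the increment's energy and that the rapidly oscillating factor $|e^{i\xi(x-y)}-1|^2$ cannot conspire to cancel it. Keeping the constants uniform in $(t,L,h)$ across the three competing scales—and, in the upper bound, routing the estimate through the saturating metric $\rho$ so that the logarithmic factor emerges as $\Psi(t,L)$ rather than a spurious $\sqrt{\log(L/|h|)}$—is the delicate part; the rest consists of standard applications of the majorizing measure and minoration theorems.
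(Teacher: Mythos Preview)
Your proposal is correct and follows the same route as the paper: obtain sharp two-sided bounds on the canonical metric $d_h$ (the paper isolates this as Lemma~\ref{lem 2space}), then apply a chaining/majorizing-measure upper bound together with Sudakov minoration on a $t^{1/\alpha}$-spaced grid for the lower bound. The paper's execution differs only in surface details---it derives the upper metric bound by direct spectral computation rather than via the triangle inequality and the single-increment estimate of Lemma~\ref{metric}, it runs Talagrand's theorem through an explicit dyadic partition rather than the Dudley entropy integral, and it proves the decorrelation lower bound $d_h(x,y)\gtrsim|h|^{\beta}$ for $|x-y|\ge t^{1/\alpha}$ by a more elaborate interval-counting argument (restricting to $\xi\in[\frac{\pi|x-y|}{4h},\frac{\pi|x-y|}{2h}]$ and summing over sub-intervals where $1-\cos\xi\ge\tfrac12$) instead of your cleaner single-band averaging on $|\xi|\sim|h|^{-1}$---but the logic and the resulting estimates are identical.
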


\begin{proposition}\label{cor2}
\begin{enumerate}
\item[(a)]
For any $0\leq\theta\leq\frac{2H+\alpha-2}{2}$ and $0<|h|\le\frac3{64}t^{\frac{1}{\alpha}}$,
\begin{equation}\label{Thm3.2-2}
\begin{split}
c_{1,5}|h|^{\frac{2H+\alpha-2}{2}}
\leq&\, \liminf_{L\rightarrow\infty}\frac{ \sup_{-L\le x\le L} \Delta_h u(t,x)  }{\sqrt{\log_2 L}} \\
\leq&\, \limsup_{L\rightarrow\infty}\frac{ \sup_{-L\le x\le L} \Delta_h u(t,x)  }{\sqrt{\log_2 L}}\leq
 c_{1,6}t^{\frac{2H+\alpha-2-2\theta}{2\alpha}}|h|^\theta,\ \ \text{a.s.},
\end{split}
\end{equation}
where $c_{1,5}$ and $c_{1,6}$ denote  the constants given in   Theorem \ref{Thm3.2}.
\item[(b)] For any $t>0$ and $0<|h|\leq \frac3{64} t^{\frac{1}{\alpha}}$,
\begin{equation}\label{u:h/log}
\limsup_{|x|\rightarrow\infty}\frac{\Delta_h u(t,x)}{\sqrt{\log |x|}}
=\sqrt{2}\|\Delta_h u(t,0)\|_{L^2(\Omega)},\ \ \text{a.s.}
\end{equation}
\end{enumerate}
\end{proposition}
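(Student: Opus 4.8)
The plan is to derive both parts of Proposition~\ref{cor2} from Theorem~\ref{Thm3.2} together with the stationarity structure of the solution, following the same scheme by which Proposition~\ref{cor1} is obtained from Theorem~\ref{Thm3.1}. The starting observation is that, for fixed $t$ and $h$, the process $x\mapsto\Delta_h u(t,x)$ is a centered, spatially stationary Gaussian process: since the equation and the noise are translation invariant in space, the mild solution $u(t,\cdot)$ is stationary in $x$, and the increment $\Delta_h u(t,x)=u(t,x+h)-u(t,x)$ inherits this. In particular $\|\Delta_h u(t,x)\|_{L^2(\Omega)}$ is independent of $x$, and $\sigma^2:=\sup_x\Var(\Delta_h u(t,x))<\infty$ by the second-moment bounds used to prove Theorem~\ref{Thm3.2}.

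For part (a), I would combine the expectation bounds of Theorem~\ref{Thm3.2} with Gaussian concentration. Writing $M_L:=\sup_{-L\le x\le L}\Delta_h u(t,x)$, the Borell--Sudakov--Tsirelson inequality gives $\bp(|M_L-\EE M_L|>r)\le 2\exp(-r^2/(2\sigma^2))$ with the same $\sigma^2$ for every $L$. Evaluating along the geometric subsequence $L_n=2^n$, where $\Psi(t,L_n)\asymp\sqrt n\asymp\sqrt{\log_2 L_n}$, and choosing $r=\varepsilon\,\EE M_{L_n}$, the resulting tail bound is summable, so Borel--Cantelli yields $M_{L_n}/\EE M_{L_n}\to 1$ a.s. Inserting the two-sided bound on $\EE M_{L_n}$ from Theorem~\ref{Thm3.2} sandwiches $M_{L_n}/\sqrt{\log_2 L_n}$ between $c_{1,5}|h|^{(2H+\alpha-2)/2}$ and $c_{1,6}t^{(2H+\alpha-2-2\theta)/(2\alpha)}|h|^{\theta}$. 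Since $L\mapsto M_L$ is nondecreasing and $\log_2 L_{n+1}/\log_2 L_n\to 1$, monotonicity fills the gaps $L\in[L_n,L_{n+1}]$ and upgrades the conclusion from the subsequence to the full limit, giving \eqref{Thm3.2-2}.

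For part (b), the argument is identical to that of Proposition~\ref{cor1}(c) (equation~\eqref{u/log}), now applied to the stationary Gaussian process $\Delta_h u(t,\cdot)$ in place of $u(t,\cdot)$. The upper bound $\limsup\le\sqrt2$ comes from the Gaussian tail estimate $\bp(\Delta_h u(t,x)>\lambda\sigma)\le e^{-\lambda^2/2}$ applied over a discretization of $\{|x|\le R\}$, a union bound, Borel--Cantelli along $R=2^n$, and control of the oscillation of $\Delta_h u$ between neighboring grid points; the matching lower bound $\limsup\ge\sqrt2$ follows from the second Borel--Cantelli lemma applied to a sequence of widely separated points, which are asymptotically independent. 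The one input that must be checked afresh for the increment is the decay of $\Corr(\Delta_h u(t,x),\Delta_h u(t,x'))$ as $|x-x'|\to\infty$, which is what makes the well-separated points nearly independent and fixes the constant at exactly $\sqrt2$.

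I expect the main obstacle to be precisely this last point in part (b): establishing a sharp enough correlation-decay estimate for $\Delta_h u(t,\cdot)$ to justify the asymptotic independence and to rule out exceptional oscillations between grid points, so that the limit is exactly $\sqrt2$ rather than merely bounded above and below. By contrast, part (a) is essentially bookkeeping once Theorem~\ref{Thm3.2} is available; note that the liminf and limsup in \eqref{Thm3.2-2} carry different constants precisely because Theorem~\ref{Thm3.2} supplies asymmetric lower and upper bounds on $\EE M_L$, while the concentration step only controls fluctuations around the (unknown exact) mean.
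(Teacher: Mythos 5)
Your part (a) is essentially the paper's argument: Borell's inequality (Theorem \ref{Borell}) applied to $M_L=\sup_{-L\le x\le L}\Delta_h u(t,x)$, using the $L$-independent variance bound $\sup_x\EE[|\Delta_h u(t,x)|^2]\le c_{2,3}^2|h|^{2H+\alpha-2}$ from \eqref{equ-metric-d1}, a Borel--Cantelli step along a subsequence, and monotone interpolation to fill the gaps. The only difference is cosmetic: you take $L_n=2^n$, so the tail is $\lesssim e^{-c\varepsilon^2 n}$, while the paper takes $L_n=n^\beta$ with $\beta$ large depending on $\varepsilon$, giving a tail $\lesssim n^{-p}$ with $p>1$; both are summable, so either choice works.

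For part (b) your plan is the right one, but the step you defer --- the correlation decay of $\Delta_h u(t,\cdot)$, which you correctly identify as the crux --- is exactly the part that needs to be proved, and the paper does prove it, largely by reuse rather than new work. Concretely, the paper verifies the two hypotheses of the Qualls--Watanabe $0$--$1$ law (Theorem \ref{Asymptotic}) for the stationary Gaussian process $x\mapsto\Delta_h u(t,x)$: hypothesis (1), the local behavior $1-\rho_2(x,y)\asymp |x-y|^{2H+\alpha-2}$ near the diagonal, follows from Lemma \ref{lem 2space} exactly as in \eqref{rho1:bounds}; hypothesis (2), polynomial decay at infinity, follows because after bounding $1-\cos(h\xi)$ the correlation of the increments is controlled by the same oscillatory integral $\int_0^\infty(1-e^{-2t\xi^\alpha})\xi^{1-2H-\alpha}\cos(\xi|x-y|)\,d\xi=O\bigl(|x-y|^{\frac{H-1}{2}}\bigr)$ already established as \eqref{claim} in the proof of Proposition \ref{cor1}(c). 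The exact constant $\sqrt2$ then drops out of the integral test \eqref{eq inte test} applied to $\phi_\lambda(x)=\sqrt{\lambda\log_2|x|}$, which is finite iff $\lambda>2$, rather than from a hand-rolled two-sided Borel--Cantelli argument (your sketch of the upper bound via discretization and the lower bound via well-separated points is in effect a re-proof of that theorem). So the proposal is not wrong in approach, but as written it leaves the one genuinely quantitative step of part (b) unsupplied; closing it amounts either to invoking Theorem \ref{Asymptotic} and checking its hypotheses via \eqref{claim}, or to carrying out the decay estimate yourself.
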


\begin{remark}
By Lemma \ref{metric} below, there exist some positive constants $c_{1,7}$ and $c_{1,8}$ such that for any $t>0$ and $0<|h|\leq \frac3{64} t^{\frac{1}{\alpha}}$,
\begin{equation*}
c_{1,7}|h|^{\frac{2H+\alpha-2}{2}}\leq\limsup_{|x|\rightarrow\infty}\frac{\Delta_h u(t,x)}{\sqrt{\log |x|}}\leq c_{1,8}|h|^{\frac{2H+\alpha-2}{2}},\ \ \text{a.s.}
\end{equation*}
\end{remark}
Finally, we establish sharp bounds for the domain-size dependence of the temporal H\"older regularity coefficient.
Denote
\beq\label{t:tau}
\mathcal D_{\tau} u(t,x):=u(t+\tau,x)-u(t,x).
\nneq
\begin{theorem}\label{Thm3.3}
For any   $0\leq\theta\leq\frac{2H+\alpha-2}{\alpha}$,
there exist some positive constants $c_{1,9}$ and $c_{1,10}$ such that
\beq\label{Thm3.3-1}
\aligned
c_{1,9}\tau^{\frac{2H+\alpha-2}{2\alpha}}\Psi(t, L)
\le&\, \EE \lk\sup_{-L\le x\le L} \mathcal D_{\tau} u(t,x) \rk\le c_{1,10}\tau^{\frac{\theta}{2}}t^{\frac{2H+\alpha-2-\alpha\theta}{2\alpha}}\Psi(t, L),
\endaligned
\nneq
for all $L\geq t^{\frac{1}{\alpha}}$ and $0<\tau\le \left(\frac3{32}\right)^\alpha t$.
\end{theorem}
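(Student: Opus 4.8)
Set $\kappa:=\frac{2H+\alpha-2}{2\alpha}$, so that the two sides of \eqref{Thm3.3-1} read $\tau^{\kappa}\Psi(t,L)$ and $\tau^{\theta/2}t^{\kappa-\theta/2}\Psi(t,L)$. The plan is to view $\{\mathcal D_\tau u(t,x):x\in\LL\}$ as a centered Gaussian process and to reduce the claim to sharp two-sided control of its covariance, feeding the outcome into Talagrand's majorizing measure theorem for the upper bound and into Sudakov's minoration for the lower bound, in parallel with the proof of Theorem~\ref{Thm3.2}. Writing the increment in spectral form, $\mathcal D_\tau u(t,x)=\int_0^{t+\tau}\!\int_\RR \tilde G(s,x-y)\,W(ds,dy)$ with
\[
\tilde G(s,z)=\big[G_{t+\tau-s}(z)-G_{t-s}(z)\big]\1_{[0,t)}(s)+G_{t+\tau-s}(z)\1_{[t,t+\tau)}(s),\qquad \widehat{G_r}(\xi)=e^{-r|\xi|^\alpha},
\]
one obtains $\Var\big(\mathcal D_\tau u(t,x)\big)=c_{1,H}\int_\RR|\xi|^{1-2H}\Phi_{t,\tau}(\xi)\,d\xi$ and $\|\mathcal D_\tau u(t,x)-\mathcal D_\tau u(t,y)\|_{L^2(\Omega)}^2=2c_{1,H}\int_\RR|\xi|^{1-2H}\Phi_{t,\tau}(\xi)\big(1-\cos(\xi(x-y))\big)\,d\xi$, where
\[
\Phi_{t,\tau}(\xi)=\frac1{2|\xi|^{\alpha}}\Big[(1-e^{-\tau|\xi|^\alpha})^2(1-e^{-2t|\xi|^\alpha})+(1-e^{-2\tau|\xi|^\alpha})\Big].
\]
The analytic cornerstone I would establish is the uniform two-sided estimate $\Phi_{t,\tau}(\xi)\es|\xi|^{-\alpha}\min(\tau|\xi|^\alpha,1)$ for $0<\tau\le t$; granting it, the substitution $\eta=\tau^{1/\alpha}\xi$ yields $\|\mathcal D_\tau u(t,x)\|_{L^2(\Omega)}\es\tau^{\kappa}$, the order carried by both bounds.

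For the upper bound I would first convert the spectral estimate into a modulus of continuity for the canonical metric $d(x,y):=\|\mathcal D_\tau u(t,x)-\mathcal D_\tau u(t,y)\|_{L^2(\Omega)}$. Using $\min(\tau|\xi|^\alpha,1)\le(\tau|\xi|^\alpha)^\theta$ together with $\int_\RR|\xi|^{1-2H-\alpha+\alpha\theta}(1-\cos(\xi h))\,d\xi\es|h|^{2H+\alpha-2-\alpha\theta}$ — an integral that converges exactly on the admissible range $0\le\theta<\frac{2H+\alpha-2}{\alpha}$, the endpoint being covered by the saturation bound — gives $d(x,y)\ls\tau^{\theta/2}|x-y|^{\frac{2H+\alpha-2-\alpha\theta}{2}}$, while the variance estimate gives $d(x,y)\ls\tau^{\kappa}$. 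I would then invoke the majorizing measure theorem in the concrete form of a two-scale chaining: partition $\LL$ into $\es L/t^{1/\alpha}$ intervals of length $t^{1/\alpha}$; the H\"older modulus controls the within-block supremum by $\tau^{\theta/2}(t^{1/\alpha})^{\frac{2H+\alpha-2-\alpha\theta}{2}}=\tau^{\theta/2}t^{\kappa-\theta/2}$, and Gaussian concentration for the block maxima adds $\tau^{\kappa}\sqrt{\log_2(L/t^{1/\alpha})}$; since $\tau\le t$ forces $\tau^{\kappa}\le\tau^{\theta/2}t^{\kappa-\theta/2}$, the two terms combine to $\tau^{\theta/2}t^{\kappa-\theta/2}\Psi(t,L)$. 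For the lower bound I would apply Sudakov's minoration to a maximal family of $N\es L/t^{1/\alpha}$ points spaced $t^{1/\alpha}$ apart: because $\Phi_{t,\tau}$ forces $d(x_i,x_j)\gs\tau^{\kappa}$ once $|x_i-x_j|\gs\tau^{1/\alpha}$ (and $t^{1/\alpha}\ge\tau^{1/\alpha}$), these points are mutually $\gs\tau^{\kappa}$-separated, so Sudakov gives $\gs\tau^{\kappa}\sqrt{\log_2(L/t^{1/\alpha})}$; retaining a single pair at distance $\es t^{1/\alpha}$ supplies the additive constant in $\Psi(t,L)$ for the regime $L\es t^{1/\alpha}$.

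The step I expect to be the main obstacle is the uniform two-sided control of $\Phi_{t,\tau}$, which is strictly harder than its spatial counterpart because the temporal increment splits into the kernel difference $G_{t+\tau-s}-G_{t-s}$ on $[0,t)$ and the fresh contribution $G_{t+\tau-s}$ on $[t,t+\tau)$; proving $\Phi_{t,\tau}(\xi)\es|\xi|^{-\alpha}\min(\tau|\xi|^\alpha,1)$ requires separating the three frequency regimes $|\xi|\ls t^{-1/\alpha}$, $t^{-1/\alpha}\ls|\xi|\ls\tau^{-1/\alpha}$, $|\xi|\gs\tau^{-1/\alpha}$ and verifying that the factor $1-e^{-2t|\xi|^\alpha}$ never spoils the lower bound for $0<\tau\le(\tfrac3{32})^\alpha t$. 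The genuinely delicate half is the lower estimate on $d(x,y)$ needed for Sudakov, i.e.\ the quantitative decorrelation $\big|\int_\RR|\xi|^{1-2H}\Phi_{t,\tau}(\xi)\cos(\xi h)\,d\xi\big|\ll\Var(\mathcal D_\tau u)$ for $|h|\gs\tau^{1/\alpha}$; here I would exploit the vanishing $|\xi|^{1-2H}$ weight at the origin together with a Riemann--Lebesgue/integration-by-parts argument to control the oscillatory integral. Throughout, the hypotheses $\tau\le(\tfrac3{32})^\alpha t$ and $0\le\theta\le\frac{2H+\alpha-2}{\alpha}$ serve precisely to guarantee $\tau^{1/\alpha}\ls t^{1/\alpha}$, so that the coarse scale $t^{1/\alpha}$ underlying both the chaining and the minoration is legitimate, and to keep the increment integral convergent.
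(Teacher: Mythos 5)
Your architecture coincides with the paper's: two-sided bounds on the canonical metric of $x\mapsto\mathcal D_\tau u(t,x)$, fed into Talagrand's majorizing measure theorem (Theorem~\ref{Talagrand}) for the upper bound and Sudakov's minoration (Theorem~\ref{Sudakov}) on a grid of spacing $t^{1/\alpha}$ for the lower bound; your two-scale chaining is just the paper's dyadic partition \eqref{partition:L} with the cut at $N_1$ made explicit, and your saturation argument $\tau^{\kappa}\le\tau^{\theta/2}t^{\kappa-\theta/2}$ is exactly how the two regimes are reconciled there. The genuine differences are in how the metric bounds are obtained, and they cut in opposite directions. For the upper bound you work from the spectral density $\Phi_{t,\tau}(\xi)\es|\xi|^{-\alpha}\min(\tau|\xi|^{\alpha},1)$, which is correct and arguably cleaner than the paper's route (Lemma~\ref{metric} plus Minkowski plus interpolation in $\theta$, packaged as Lemma~\ref{lem:diff3}(a)); note, though, that this estimate is not the ``main obstacle'' you fear --- its lower half follows from the $(1-e^{-2\tau|\xi|^{\alpha}})$ term alone, i.e.\ from the independent noise on $[t,t+\tau]$, so the factor $1-e^{-2t|\xi|^{\alpha}}$ never needs to be controlled from below. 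For the Sudakov separation $d_{3,t,\tau}(x_i,x_j)\gs\tau^{\kappa}$ you propose a decorrelation estimate on the oscillatory integral $\int|\xi|^{1-2H}\Phi_{t,\tau}(\xi)\cos(\xi h)\,d\xi$ via Riemann--Lebesgue and integration by parts; this can be made to work but is the weakest link of your sketch, since you need it quantitatively (a fixed fraction of the variance) uniformly in $|h|\gs t^{1/\alpha}$. The paper avoids oscillation entirely (Lemma~\ref{lem:diff3}(b)): it discards the $[0,t]$ contribution by independence, restricts the remaining frequency integral to the window $\bigl[\tfrac{|x-y|\pi}{2\tau^{1/\alpha}},\tfrac{|x-y|\pi}{\tau^{1/\alpha}}\bigr]$ where $1-\exp(-2\tau\xi^{\alpha}/|x-y|^{\alpha})$ is bounded below, and lower-bounds $\int(1-\cos\xi)\,\xi^{1-2H-\alpha}d\xi$ by summing over the subintervals $I_k$ where $1-\cos\xi\ge\tfrac12$, which is where the constant $\bigl(\tfrac{3}{32}\bigr)^{\alpha}$ actually enters (it guarantees the window contains enough full periods). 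If you adopt that window argument in place of the decorrelation step, your proof is complete and matches the theorem as stated.
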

\begin{proposition}\label{cor3}
\begin{enumerate}
\item[(a)]
For any $0\leq\theta\leq\frac{2H+\alpha-2}{\alpha}$ and $0<\tau\le \left(\frac3{32}\right)^\alpha t$,
\begin{equation}\label{Thm3.3-2}
\begin{split}
c_{1,9}\tau^{\frac{2H+\alpha-2}{2\alpha}}
\leq&\, \liminf_{L\rightarrow\infty}\frac{ \sup_{-L\le x\le L} \mathcal D_{\tau}u(t,x)  }{\sqrt{\log_2 L}} \\
\leq&\, \limsup_{L\rightarrow\infty}\frac{ \sup_{-L\le x\le L} \mathcal D_{\tau} u(t,x)  }{\sqrt{\log_2 L}}\leq
 c_{1,10}\tau^{\frac{\theta}{2}}t^{\frac{2H+\alpha-2-\alpha\theta}{2\alpha}},\ \ \text{a.s.},
\end{split}
\end{equation}
where $c_{1,9}$ and $c_{1,10}$   denote  the constants given  in Theorem \ref{Thm3.3}.
\item[(b)]
For any $t>0$ and $0<\tau\leq \left(\frac3{32}\right)^\alpha t$,
\begin{equation}\label{u:tau/log}
\limsup_{|x|\rightarrow\infty}\frac{\mathcal D_{\tau} u(t,x)}{\sqrt{\log |x|}}
=\sqrt{2}\|\mathcal D_{\tau} u(t,0)\|_{L^2(\Omega)},\ \ \text{a.s.}
\end{equation}
\end{enumerate}
\end{proposition}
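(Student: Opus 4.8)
The plan is to deduce Proposition~\ref{cor3} from the sharp expectation bounds of Theorem~\ref{Thm3.3} together with Gaussian concentration, exactly paralleling the passage from Theorem~\ref{Thm3.1} to Proposition~\ref{cor1}. Throughout, fix $t>0$ and $0<\tau\le(3/32)^\alpha t$ and set $S_L:=\sup_{-L\le x\le L}\mathcal D_\tau u(t,x)$. The first observation is that, since the spectral measure $\mu(d\xi)=c_{1,H}|\xi|^{1-2H}d\xi$ and the fractional heat kernel are translation invariant, the field $x\mapsto\mathcal D_\tau u(t,x)$ is a centered, spatially stationary Gaussian process; in particular its variance $\sigma_\tau^2:=\Var(\mathcal D_\tau u(t,0))=\|\mathcal D_\tau u(t,x)\|_{L^2(\Omega)}^2$ is finite and independent of $x$, so that $\sup_{|x|\le L}\Var(\mathcal D_\tau u(t,x))=\sigma_\tau^2$ is bounded uniformly in $L$.

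For part (a), I would first record that, because $\Psi(t,L)=1+\sqrt{\log_2(L/t^{1/\alpha}\vee1)}$ satisfies $\Psi(t,L)/\sqrt{\log_2 L}\to1$ as $L\to\infty$, Theorem~\ref{Thm3.3} yields
\[
\liminf_{L\to\infty}\frac{\EE[S_L]}{\sqrt{\log_2 L}}\ge c_{1,7}\tau^{\frac{2H+\alpha-2}{2\alpha}},\qquad
\limsup_{L\to\infty}\frac{\EE[S_L]}{\sqrt{\log_2 L}}\le c_{1,8}\tau^{\frac{\theta}{2}}t^{\frac{2H+\alpha-2-\alpha\theta}{2\alpha}}.
\]
It then suffices to prove $|S_L-\EE[S_L]|=o(\sqrt{\log_2 L})$ almost surely. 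Applying the Borel--TIS inequality with the uniform variance proxy $\sigma_\tau^2$ gives $\mathbb{P}(|S_L-\EE[S_L]|>u)\le 2\exp(-u^2/(2\sigma_\tau^2))$; taking $L=2^n$ and $u=\ep\sqrt{n}$ makes the right-hand side summable in $n$, so by the first Borel--Cantelli lemma $|S_{2^n}-\EE[S_{2^n}]|\le\ep\sqrt n$ eventually, for every $\ep>0$. The monotonicity $S_{2^n}\le S_L\le S_{2^{n+1}}$ for $2^n\le L\le 2^{n+1}$, together with $\sqrt{\log_2 L}\asymp\sqrt n$ on this range, then interpolates the dyadic estimate to all $L$ and produces \eqref{Thm3.3-2}.

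For part (b), the argument is the spatial extreme-value analysis already carried out for $u(t,\cdot)$ in Proposition~\ref{cor1}(c) and for $\Delta_h u(t,\cdot)$ in Proposition~\ref{cor2}(b), now applied to the stationary process $Z_x:=\mathcal D_\tau u(t,x)$ normalized to unit variance. The two ingredients are: an upper bound $\limsup_{|x|\to\infty}Z_x/(\sigma_\tau\sqrt{\log_2|x|})\le\sqrt2$, obtained by a union bound over a fine spatial discretization together with the first Borel--Cantelli lemma and the Gaussian tail estimate; and a matching lower bound, obtained by extracting a sparse sequence $x_k\to\infty$ along which the values $Z_{x_k}$ are asymptotically independent and invoking the second Borel--Cantelli lemma (or a Slepian comparison). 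Both steps hinge on the decay $\Corr(Z_0,Z_r)\to0$ at a suitable rate as $|r|\to\infty$.

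I expect the main obstacle to be precisely this covariance analysis for the temporal increment process. Unlike the variance, which is controlled as in Theorem~\ref{Thm3.3}, the decay of $\Corr(\mathcal D_\tau u(t,0),\mathcal D_\tau u(t,r))$ must be extracted from the spectral representation of the solution, where the interplay of the factor $|\xi|^{1-2H}$ from $\mu$, the two semigroup factors at times $t$ and $t+\tau$, and the oscillatory factor $e^{\mathrm{i} r\xi}$ governs the rate; one must verify a Berman-type decorrelation condition ensuring that distant spatial points become independent fast enough for the universal constant $\sqrt2$ to emerge. This is the same equation-specific estimate underlying Proposition~\ref{cor1}(c), adapted to the increment $\mathcal D_\tau$, and once it is in hand the two Borel--Cantelli arguments are routine.
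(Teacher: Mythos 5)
Your proposal follows essentially the same route as the paper: for part (a) the paper likewise combines the two-sided bounds of Theorem~\ref{Thm3.3} with Borell's inequality and the Borel--Cantelli lemma along a polynomially growing subsequence $L(n)=n^{\beta}$ (using the uniform variance bound $\sup_x\EE[|\mathcal D_{\tau}u(t,x)|^2]\le c_{2,3}^2\tau^{(2H+\alpha-2)/\alpha}$ from Lemma~\ref{metric}) and then interpolates by monotonicity, and for part (b) it rests on exactly the two correlation properties you identify, namely the local behaviour $1-c|x-y|^{2H+\alpha-2}$ of $\rho_3$ near the diagonal (from Lemma~\ref{lem:diff3}) and a polynomial decorrelation at infinity. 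The only substantive difference is that the paper does not rerun the two Borel--Cantelli arguments for the extreme-value dichotomy but instead invokes the Qualls--Watanabe $0$--$1$ law (Theorem~\ref{Asymptotic}), and the covariance-decay estimate you flag as the main obstacle is obtained there with little extra work: expanding the spectral representation of $\rho_3$ shows it is dominated by two integrals of the form $\int_0^\infty(1-e^{-2r\xi^{\alpha}})\xi^{1-2H-\alpha}\cos(\xi|x-y|)\,d\xi$ (with $r=t$ and $r=\tau$), each of which is $O(|x-y|^{(H-1)/2})$ by the bound \eqref{claim} already established in the proof of Proposition~\ref{cor1}(c), so the ``equation-specific estimate'' reduces to a citation rather than a new oscillatory-integral analysis.
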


\begin{remark}
By Lemma \ref{metric} below, there exist some positive constants $c_{1,11}$ and $c_{1,12}$ such that for any $t>0$ and $0<\tau\leq \left(\frac3{32}\right)^\alpha t$,
\begin{equation*}
c_{1,11}\tau^{\frac{2H+\alpha-2}{2\alpha}}\leq\limsup_{|x|\rightarrow\infty}\frac{\mathcal D_{\tau} u(t,x)}{\sqrt{\log |x|}}\leq c_{1,12}\tau^{\frac{2H+\alpha-2}{2\alpha}},\ \ \text{a.s.}
\end{equation*}
\end{remark}

The precise asymptotics and sharp growth rates for H\"older coefficients established in our main theorems are theoretically significant. Moreover, these results provide valuable tools for constructing power-decay functions, which have  direct applications to the well-posedness analysis of nonlinear  stochastic heat equation driven by rough noise, as demonstrated in \cite{HW2022}.  
Specifically, building upon the foundational work of Hu and Wang   \cite{HW2022}, Qian and Wang \cite{QW2025} established  the well-posedness of solutions to a nonlinear fractional stochastic heat equation in one spatial dimension:
\begin{equation}
\frac{\partial}{\partial t}u(t, x) = -(-\Delta)^{\alpha/2}u(t, x) + \sigma(t,x,u(t,x)) \dot{W}(t, x),
\end{equation}
where $\dot{W}$ denotes a space-time noise that is white in time and fractional in space with Hurst index $\frac{3-\alpha}{4} < H < \frac{1}{2}$ and  $1 < \alpha < 2$. Compared  with  the earlier  work of  Liu and Mao \cite{LM22}, the technical assumption $\sigma(0)=0$ is removed in \cite{QW2025}.

To construct  a suitable  decay weight function,  it is necessary  to derive lower bounds for the following semi-norm, which  quantifies the local variability of $u(t,x)$,
\begin{align}\label{eq norm}
\mathcal N_{\frac12-H}u(t,x):=\left(\int_{\RR}|u(t,x+h)-u(t,x)|^2 |h|^{2H-2}dh\right)^{\frac12}.
\end{align}

\begin{proposition} \label{E:sup:N} For any fixed $t>0$, there exists a positive constant $c_{t}$  depending on $t$ such that for any  $L\geq t^{\frac1\alpha}$,
\begin{equation}\label{E:sup:N:lower}
\mathbb E\left[\sup_{-L\leq x\leq L}\mathcal N^2_{\frac12-H}u(t,x)\right]\geq c_{t}\log_2(L).
\end{equation}
\end{proposition}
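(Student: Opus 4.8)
The plan is to reduce the claim to a lower bound for the supremum of a genuine centered Gaussian process, to which Sudakov's minoration applies, exactly in the spirit of the lower bounds in Theorems~\ref{Thm3.1}--\ref{Thm3.3}. Write $\mathcal H:=L^2(\RR,|h|^{2H-2}dh)$, so that $\mathcal N_{\frac12-H}u(t,x)=\|\Delta_\cdot u(t,x)\|_{\mathcal H}$, viewing $h\mapsto\Delta_h u(t,x)$ as an element of $\mathcal H$. I would fix once and for all a deterministic $g\in\mathcal H$ with $\|g\|_{\mathcal H}\le 1$, chosen below, and set
$$
Y_x:=\langle\Delta_\cdot u(t,x),\,g\rangle_{\mathcal H}=\int_\RR\Delta_h u(t,x)\,g(h)\,|h|^{2H-2}\,dh .
$$
Since $u(t,\cdot)$ is a centered, spatially stationary Gaussian field (the linear equation has zero initial data and spatially stationary noise covariance), $\{Y_x\}_{x\in\RR}$ is a centered stationary Gaussian process. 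By Cauchy--Schwarz in $\mathcal H$, $|Y_x|\le\mathcal N_{\frac12-H}u(t,x)\,\|g\|_{\mathcal H}\le\mathcal N_{\frac12-H}u(t,x)$, whence $\sup_{-L\le x\le L}\mathcal N^2_{\frac12-H}u(t,x)\ge\big(\sup_{-L\le x\le L}|Y_x|\big)^2$. Taking expectations and using Jensen's inequality for $z\mapsto z^2$,
$$
\EE\Big[\sup_{-L\le x\le L}\mathcal N^2_{\frac12-H}u(t,x)\Big]\ \ge\ \Big(\EE\Big[\sup_{-L\le x\le L}|Y_x|\Big]\Big)^2\ \ge\ \Big(\EE\Big[\sup_{-L\le x\le L}Y_x\Big]\Big)^2 .
$$
It therefore suffices to prove $\EE[\sup_{-L\le x\le L}Y_x]\ge c'_t\sqrt{\log_2 L}$ for $L\ge t^{1/\alpha}$.

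Next I would compute the covariance of $Y$ from the spectral data of $u(t,\cdot)$. Recalling $\mathcal F G_s(\xi)=e^{-s|\xi|^\alpha}$ and $\mu(d\xi)=c_{1,H}|\xi|^{1-2H}d\xi$, the field $u(t,\cdot)$ has spatial spectral density
$$
S_t(\xi)=c_{1,H}\,\frac{1-e^{-2t|\xi|^\alpha}}{2|\xi|^\alpha}\,|\xi|^{1-2H},
$$
so $Y$ is stationary with spectral density $f_Y(\xi)=|\widehat m(\xi)|^2 S_t(\xi)$, where $\widehat m(\xi)=\int_\RR(e^{i\xi h}-1)g(h)|h|^{2H-2}dh$, and covariance $\rho(r)=\EE[Y_0Y_r]=\int_\RR e^{i\xi r}f_Y(\xi)\,d\xi$. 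I would choose $g$ adapted to the natural scale $|h|\asymp t^{1/\alpha}$ (for instance a normalized multiple of $\mathbf 1_{\{t^{1/\alpha}\le|h|\le 2t^{1/\alpha}\}}$) so that $f_Y\in L^1(\RR)$; then $\rho$ is continuous, $\rho(0)=\Var Y_0=\int_\RR f_Y(\xi)\,d\xi>0$ is strictly positive of an explicit order in $t$, and $\rho(r)\to0$ as $|r|\to\infty$ by the Riemann--Lebesgue lemma.

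With these two properties, Sudakov's minoration closes the argument. By stationarity the canonical metric satisfies $\|Y_x-Y_{x'}\|^2_{L^2(\Omega)}=2\big(\rho(0)-\rho(x-x')\big)$. Since $\rho(r)\to0$, there is a finite decorrelation length $\ell_t$ with $|\rho(r)|\le\rho(0)/2$ for all $|r|\ge\ell_t$, so any two points at distance $\ge\ell_t$ are at canonical distance $\ge\sqrt{\rho(0)}=:\delta_t$. Placing $N:=\lfloor 2L/\ell_t\rfloor\asymp L$ equally spaced points in $[-L,L]$ yields an $N$-point set with all pairwise canonical distances $\ge\delta_t$; Sudakov's minoration then gives $\EE[\sup_{-L\le x\le L}Y_x]\ge c\,\delta_t\sqrt{\log N}\ge c'_t\sqrt{\log_2 L}$ for $L\ge t^{1/\alpha}$. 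Combined with the reduction above, this proves \eqref{E:sup:N:lower} with $c_t=(c'_t)^2$.

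The main obstacle is the quantitative spectral analysis behind the choice of $g$: one must verify that $f_Y$ is integrable and exhibit an explicit positive lower bound $\rho(0)\ge c_t$ together with a finite decorrelation length $\ell_t$, tracking the dependence on $t$ so that the final constant $c_t$ is positive and independent of $L$. All remaining ingredients---spatial stationarity of $u(t,\cdot)$, a.s.\ finiteness of $\mathcal N_{\frac12-H}u(t,x)$ under the standing assumption $\frac{2-\alpha}2<H<\frac12$, and the Cauchy--Schwarz and Jensen reductions---are routine, and the Sudakov step is identical in spirit to the lower bounds already obtained for the increment processes in Theorems~\ref{Thm3.2} and \ref{Thm3.3}.
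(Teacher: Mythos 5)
Your proposal is correct and follows essentially the same route as the paper: the paper tests $\Delta_h u(t,x)$ against a fixed density $\varrho(h)$ (which is exactly your $g(h)|h|^{2H-2}$ with $\|g\|_{\mathcal H}<\infty$), applies Cauchy--Schwarz and Jensen to reduce the problem to $\EE[\sup_x u_\varrho(t,x)]$, and concludes with Sudakov's minoration on a grid of spacing $t^{1/\alpha}$. The only difference is how the canonical-metric separation is obtained --- the paper proves an explicit lower bound on $d_{4,t,\varrho}(x,y)$ for $|x-y|\ge t^{1/\alpha}$ via oscillatory-integral estimates, whereas you extract a (non-quantitative but sufficient) decorrelation length from continuity of $\rho$ and the Riemann--Lebesgue lemma, so the ``main obstacle'' you flag is in fact already disposed of by your soft argument.
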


Note that when  $\alpha=2$,  the above estimates reduce to those obtained by Hu and Wang \cite{HW2022} for the stochastic heat equation with rough noise.

The remainder of the paper is organized as follows.   In Section 2, we  review  foundational properties of solutions and establish  sharp moment bounds. In Section 3, we  present the proofs of the main results, while technical lemmas are provided in the appendix.

\section{Moment estimates for the temporal and spatial increments}

\subsection{Stochastic integral and well-posedness of the solution}
Let us  recall some notations from \cite{HHLNT2017} and  \cite{HW2022}.   Let  $\cD(\RR)$  be the space of real-valued infinitely differentiable  functions with compact support on $\mathbb{R}$.      The Fourier transform of  a function $f\in\cD(\RR)$ is defined as
     $$
      \cF f(\xi):=\int_{\RR} e^{-i\xi x}f(x) dx.
    $$
   The Green kernel associated with the operator
$-(-\Delta)^{\frac{\alpha}{2}}$, where $\alpha\in (1,2]$, is denoted by  $G_{\alpha}$ and   defined via its Fourier transform 
    \begin{equation}\label{F-Green kernel}
    (\cF G_{\alpha}(t,\cdot))(\xi)=e^{-t|\xi|^{\alpha}}, \ \ \xi\in{\RR},\ \ t> 0.
    \end{equation}
See, e.g.,  \cite{ANTV22, CD15, CHN21}.

     Let $(\Omega,\cF,\mathbb P)$ be a complete probability space and  let $\mathcal{D}(\mathbb{R}_{+}\times \mathbb{R})$  denote  the space of real-valued, infinitely differentiable functions with compact support  in $\mathbb{R}_{+}\times \mathbb{R}$. The noise $\dot W$ is a zero-mean Gaussian family $\{W(\phi), \phi\in\cD(\RR_{+}\times \RR)\}$,  whose covariance structure is given by
     \begin{equation}\label{eq H product}
    \langle\phi,\psi\rangle_{\mathcal H}:= \EE\big[ W(\phi)W(\psi)\big]=c_{H}\int_{\RR_{+}\times \RR} \cF \phi(s,\xi) \overline{\cF \psi(s,\xi)}  |\xi|^{1-2H}d\xi ds,
     \end{equation}
     where $H\in\left(0, 1\right)$, $c_{H}$ is given in \eqref{e.c1},
     and $\cF \phi(s,\xi)$ is the Fourier transform of $\phi(s,x)$ with respect to the spatial variable $x$.
     
     Let $\mathcal H$  be the completion of $\mathcal D(\RR_+\times\RR)$ with the inner product
     $\langle\cdot,\cdot\rangle_{\mathcal H}$. The mapping $\varphi\mapsto W(\varphi)$ can be extended to all $\varphi\in\mathcal H$, which is called the Wiener integral.
\begin{definition}(\cite[Definition 4.1]{B2012})
The random field  $\{u(t,x); (t,x)\in \mathbb R_+\times \mathbb R\}$ defined by
\begin{equation}\label{u}
u(t, x)= \int_0^t \int_{\mathbb{R}}G_{\alpha}(t-s,x-y)  W(ds,dy)
\end{equation}
is called a  solution of Equation \eqref{SFHE}, provided that the stochastic integral on the right-hand side of \eqref{u} is well-defined. That is,  for every $t>0$ and $x\in\RR$, the function 
$$
g_{t,x}:=G_{\alpha}(t-\cdot,x-\cdot){\mathbf 1}_{[0,t]} 
$$
belongs to the space $\mathcal H$.
\end{definition}
  According to  Balan   \cite[Remark 5.4]{B2012},  the existence of a solution to Equation  \eqref{SFHE} can be stated as follows. 
\begin{proposition}\label{solution}  Equation \eqref{SFHE}  admits  a unique  solution $\{u(t,x); (t,x)\in \mathbb R_+\times \mathbb R\}$ of the form   \eqref{u} if and only if $$H\in\left(1-\frac{\alpha}{2},1\right).$$ In this case, for any $t>0$ and $x\in \mathbb R$, 
\begin{equation}\label{u-L2}
\|u(t,x)\|_{L^2(\Omega)}^2=c_{\alpha, H}t^{\frac{2H+\alpha-2}{\alpha}},
\end{equation}
where the constant  $c_{\alpha, H}$ is given by \eqref{eq c a H}.
\end{proposition}
\begin{proof}
Using \eqref{F-Green kernel},     for any $ \xi,x\in\RR$  and $t>0$, we have 
\begin{equation}\label{Fourier}
(\mathcal {F}G_{\alpha}(t,x-\cdot))(\xi)=e^{-t|\xi|^{\alpha}-i\xi x}.
\end{equation}
By \eqref{eq H product}, \eqref{Fourier}, and the change of variables
$\eta= 2(t-s) \xi^{\alpha}$, we obtain that   for all $x\in\RR$,
\begin{equation}\label{eq u moment}
\begin{split}
\EE\left[|u(t,x)|^2\right]
=&\,2c_{H}\int_0^t\int_0^{\infty}e^{-2(t-s)|\xi|^{\alpha}}|\xi|^{1-2H}d\xi ds \\
=&\, \frac{c_{H}}{\alpha}2^{\frac{2H+\alpha-2}{\alpha}}\int_0^\infty e^{-\eta }\eta^{\frac{2-2H}{\alpha}-1} d\eta \int_0^{t} (t-s)^{\frac{2H-2}{\alpha}} ds \\
=&\, \frac{c_{H}}{\alpha}2^{\frac{2H+\alpha-2}{\alpha}} \Gamma\left(\frac{2-2H}{\alpha}\right)\cdot \int_0^{t} (t-s)^{\frac{2H-2}{\alpha}} ds.
\end{split}
\end{equation}
Note that the last integral  converges if and only if $H> 1-\frac{\alpha}{2}$. In this case,   \eqref{u-L2} holds. The proof is complete.
\end{proof}

\subsection{H\"{o}lder continuity of  the solution}
In this part,  we give a sharp moment estimate for the increment
\begin{align}\label{eq d1}
d_1 ((t,x), (s,y)):=\left(\EE\left[| u(t,x)-  u(s,y)|^2\right]\right)^{\frac12}.
\end{align}
\begin{lemma}\label{metric} There exist some  positive constants $c_{2,1}$ and $c_{2, 2}$ such that for any  $(t,x)$, $(s,y)\in \RR_+\times\RR$,
\begin{equation}\label{equ-metric-d1}
c_{2,1}\tilde d_{1}((t,x),(s,y))\le  d_1 ((t,x), (s,y)) \le c_{2,2}\tilde d_{1}((t,x),(s,y)),
\end{equation}
  where
\begin{equation*}
\tilde d_{1}((t,x),(s,y))
:= |x-y|^{\frac{2H+\alpha-2}{2}}\wedge(t\wedge s)^{\frac{2H+\alpha-2}{2\alpha}}+|t-s|^{\frac{2H+\alpha-2}{2\alpha}}.
\end{equation*}
\end{lemma}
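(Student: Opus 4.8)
The plan is to start from the Wiener-integral representation $u(t,x)=W(g_{t,x})$ with $g_{t,x}=G_\alpha(t-\cdot,x-\cdot)\mathbf 1_{[0,t]}$, so that $d_1^2((t,x),(s,y))=\|g_{t,x}-g_{s,y}\|_{\mathcal H}^2$, and to evaluate this norm through the spectral formula \eqref{eq H product} together with the spatial Fourier transform \eqref{Fourier}. Writing $\gamma:=2H+\alpha-2$, which lies in $(0,1)$ since $\frac{2-\alpha}2<H<\frac12$, and assuming without loss of generality that $s\le t$ (so $t\wedge s=s$) and setting $z:=x-y$, one obtains
\[
d_1^2=c_{1,H}\int_0^\infty\!\!\int_\RR\bigl|\mathbf 1_{[0,t]}(r)e^{-(t-r)|\xi|^\alpha-i\xi x}-\mathbf 1_{[0,s]}(r)e^{-(s-r)|\xi|^\alpha-i\xi y}\bigr|^2|\xi|^{1-2H}\,d\xi\,dr.
\]
Every estimate below reduces to one-dimensional integrals handled by the scaling substitutions $\eta=2(t-r)|\xi|^\alpha$, $w=s^{1/\alpha}\xi$ and $w=|z|\xi$, exactly as in the proof of Proposition \ref{solution}.

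For the \emph{upper} bound I would use that $d_1$ is a genuine $L^2(\Omega)$-metric and split by the triangle inequality $d_1((t,x),(s,y))\le d_1((t,x),(s,x))+d_1((s,x),(s,y))$. In the purely temporal term the integrand separates over $r\in(s,t)$ and $r\in(0,s)$: the $(s,t)$-piece rescales exactly to $|t-s|^{\gamma/\alpha}$, while the $(0,s)$-piece equals $c_{1,H}\int_0^s\!\int_\RR(e^{-(t-r)|\xi|^\alpha}-e^{-(s-r)|\xi|^\alpha})^2|\xi|^{1-2H}\,d\xi\,dr$, which after the elementary inequality $(1-e^{-a})^2\le 1-e^{-2a}$ and rescaling is again $\ls|t-s|^{\gamma/\alpha}$. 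The purely spatial term equals $c_{1,H}\int_\RR(1-e^{-2s|\xi|^\alpha})(1-\cos(\xi z))|\xi|^{1-2H-\alpha}\,d\xi$; bounding $1-e^{-2s|\xi|^\alpha}\le 1$ gives $\ls|z|^{\gamma}$, and bounding $1-\cos(\xi z)\le 2$ gives $\ls s^{\gamma/\alpha}$, hence this term is $\ls |z|^{\gamma/2}\wedge s^{\gamma/(2\alpha)}$. Together these yield $d_1\ls\tilde d_1$.

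For the \emph{lower} bound the triangle inequality is useless, so I would bound $d_1$ below by the temporal and the spatial scales separately and then invoke $\max\{P,Q\}\ge\tfrac12(P+Q)$. The temporal bound is immediate: restricting the $r$-integral to $(s,t)$, where $g_{s,y}$ vanishes, leaves precisely the piece that rescales to $|t-s|^{\gamma/\alpha}$, so $d_1\gs|t-s|^{\gamma/(2\alpha)}$. For the spatial bound the crucial step is the algebraic identity
\[
\bigl|e^{-(t-r)|\xi|^\alpha-i\xi x}-e^{-(s-r)|\xi|^\alpha-i\xi y}\bigr|^2=\bigl(e^{-(t-r)|\xi|^\alpha}-e^{-(s-r)|\xi|^\alpha}\bigr)^2+2e^{-(t+s-2r)|\xi|^\alpha}\bigl(1-\cos(\xi z)\bigr),
\]
in which both summands are nonnegative; dropping the first and integrating over $r\in(0,s)$ gives the lower bound $d_1^2\gs \int_0^s\!\int_\RR e^{-(t+s-2r)|\xi|^\alpha}(1-\cos(\xi z))|\xi|^{1-2H}\,d\xi\,dr$. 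I would then argue by cases on the three scales $|z|^\alpha$, $|t-s|$ and $s=t\wedge s$: when $|z|^\alpha\le|t-s|$, or when $|z|^\alpha\ge s$ and $|t-s|\ge s$, the target $|z|^{\gamma/2}\wedge s^{\gamma/(2\alpha)}$ is already dominated by the temporal lower bound just proved; in the remaining regimes one restricts $r$ to $(s-|z|^\alpha,s)$ when $|t-s|<|z|^\alpha\le s$, and to $(0,s/2)$ when $|t-s|<s<|z|^\alpha$, on which $t+s-2r\es|z|^\alpha$ respectively $t+s-2r\es s$, and the rescaled integrals $\int_\RR e^{-3|w|^\alpha}(1-\cos w)|w|^{1-2H}dw$ and $\int_0^\infty e^{-3w^\alpha}(1-\cos(w\tilde z))w^{1-2H}dw$ (with $\tilde z=z/s^{1/\alpha}$, $|\tilde z|\ge1$) are bounded below by positive constants — the latter uniformly in $|\tilde z|\ge1$ by continuity, strict positivity and the Riemann--Lebesgue limit. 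This gives $d_1\gs|z|^{\gamma/2}\wedge s^{\gamma/(2\alpha)}$ in every case, and combining the two one-sided bounds produces constants $c_{2,2},c_{2,3}$ depending only on $\alpha,H$.

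The main obstacle is exactly this spatial lower bound: the temporal and spatial increments are entangled through the factor $e^{-(t+s-2r)|\xi|^\alpha}$, so the short-distance behaviour $|z|^{\gamma}$ — which is carried by the high frequencies $\xi\sim|z|^{-1}$ — can only be recovered from the slice of $r$ near $s$, and only when $|t-s|$ does not itself suppress those frequencies. The nonnegative splitting above, together with the case analysis that hands the awkward regimes back to the already-established temporal lower bound, is what makes the estimate close; the upper bound, by contrast, is routine once the triangle inequality is applied.
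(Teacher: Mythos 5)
Your proposal is correct, and the skeleton coincides with the paper's: both split off the integral over $(s,t)$ (your "temporal piece", the paper's $I_2$), which by independence contributes exactly $c_{2,1}|t-s|^{(2H+\alpha-2)/\alpha}$ to $d_1^2$, and both get the upper bound by the triangle inequality plus the cosine-integral identity \eqref{cos}. Where you genuinely diverge is the spatial lower bound on the $(0,s)$-piece $I_1$. The paper keeps the full bracket $1+e^{-2(t-s)\xi^{\alpha}}-2e^{-(t-s)\xi^{\alpha}}\cos(\xi z)$, bounds it below by $\bigl(1-e^{-(t-s)\xi^{\alpha}}\cos(\xi z)\bigr)^2$, and then restricts (after rescaling $\xi\mapsto\xi/|z|$) to the intervals $[2n\pi+\tfrac{\pi}{2},2n\pi+\tfrac{3\pi}{2}]$ where $\cos\xi\le 0$, so that the squared factor is $\ge 1$ \emph{uniformly in $t-s$}; this decouples the temporal separation completely and leaves only a two-case split on $|z|$ versus $s^{1/\alpha}$ (with the counter $n_0$ of \eqref{eq n0}). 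You instead drop the nonnegative term $(e^{-(t-r)|\xi|^\alpha}-e^{-(s-r)|\xi|^\alpha})^2$ from the polarization identity, retain $2e^{-(t+s-2r)|\xi|^\alpha}(1-\cos(\xi z))$, and pay for the residual entanglement with $t-s$ by a three-scale case analysis in which the awkward regimes are handed back to the already-proved temporal bound. Both arguments close: yours is more elementary pointwise (pure nonnegativity plus scaling substitutions and a compactness/Riemann--Lebesgue argument for the uniform positivity of $\int e^{-3|w|^\alpha}(1-\cos(w\tilde z))|w|^{1-2H}\,dw$ over $|\tilde z|\ge 1$), while the paper's negative-cosine trick buys a shorter, case-light proof whose mechanism is then reused verbatim in Lemmas \ref{lem 2space} and \ref{lem:diff3}; the constants from either route depend only on $H$ and $\alpha$, as required.
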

\begin{proof}
Without loss of generality, we assume $t>s>0$.
By \eqref{u}, we have
\begin{align*}
u(t,x)-u(s,y)=&\, \int_{0}^s\int_{\RR}\lk G_{\alpha}(t-r, x-z)-G_{\alpha}(s-r, y-z)\rk W(dr, dz)\\
&   +\int_{s}^t\int_{\RR} G_{\alpha}(t-r, x-z) W(dr, dz)\\
=&:I_1+I_2.
\end{align*}
The independence of the stochastic integrals over the time intervals $[0, s]$ and $[s, t]$  gives
\begin{equation}\label{d1 equality}
\EE\left[|u(t,x)-u(s,y)|^2\right]=\EE\left[I_1^2\right]+\EE\left[I_2^2\right].
\end{equation}

As in the proof of \eqref{eq u moment}, we obtain
\begin{equation}\label{E-I2}
\EE\left[I_2^2\right]= c_{\alpha, H}
(t-s)^{\frac {2H+\alpha-2}{\alpha}}.
\end{equation}

By \eqref{eq H product} and \eqref{Fourier},  we have
\begin{equation}\label{E-I1}
\begin{split}
\EE\left[I_1^2\right]=&\, c_{H}\int_{0}^s\int_{\RR}\lt|\FF G_{\alpha}(t-r, x-\cdot)(\xi)-\FF G_{\alpha}(s-r, y-\cdot)(\xi)\rt|^2   |\xi|^{1-2H}d\xi dr \\
 =& \, c_{H}\, \int_{0}^s\int_{\RR} e^{-2(s-r)|\xi|^{\alpha}}
\left[ 1+e^{-2(t-s)|\xi|^{\alpha}}-2e^{-(t-s)|\xi|^{\alpha}} \cos(\xi (x-y)) \right]  |\xi|^{1-2H}d\xi dr \\
= &\, c_{H}\int_{0}^{\infty} \lc 1-e^{-2s\xi^{\alpha}}\rc
     \left[ 1+e^{-2(t-s)\xi^{\alpha}}
       -2e^{-(t-s)\xi^{\alpha}}\cos(\xi(x-y))\right]   \xi^{1-2H-\alpha}d\xi.
       \end{split}
\end{equation}

Next, we derive matching upper and lower bounds for the above integral.

\noindent \textbf{Upper bound.}
By   \eqref{E-I1},  we have
\begin{equation}\label{x}
\begin{split}
\EE\left[\left|u(s,x)-u(s,y)\right|^2\right]
=&\,    2c_{H}\int_{0}^{\infty} \lc 1-e^{-2s\xi^{\alpha}}\rc
     \left[ 1-\cos(\xi(x-y))\right]  \xi^{1-2H-\alpha}d\xi \\
\leq&\,  2c_{H}  \int_{0}^{\infty}\left[ 1-\cos(\xi(x-y))\right]   \xi^{1-2H-\alpha}d\xi \\
=&\, 2c_{H} \frac{\Gamma(3-2H-\alpha)}{2H+\alpha-2}\cos\left(\left(H+\frac{\alpha}{2}-1\right)\pi\right) |x-y|^{2H+\alpha-2}.
\end{split}
\end{equation}
Here, the following identity  is used in the last step:
for any $\gamma\in(0,1)$ and $\xi\in\RR$,
\begin{equation}\label{cos}
\int_0^{\infty} \frac{1-\cos(\xi z)}{z^{1+\gamma}}dz= \frac{\Gamma(1-\gamma)}{\gamma}\cos\left(\frac{\pi\gamma}{2}\right) |\xi|^{\gamma}.
\end{equation}
See, e.g.,  \cite[Lemma D.1]{BJQ2015}.

By \eqref{d1 equality}, \eqref{E-I2}, \eqref{E-I1}, and
the change of variables $\eta=(t-s)\xi^\alpha$, we have
\begin{equation}\label{t}
\begin{split}
&\EE\left[\left|u(t,x)-u(s,x)\right|^2\right]\\
=&\, c_{H}\int_{0}^{\infty} \lc 1-e^{-2s\xi^{\alpha}}\rc
     \left(1-e^{-(t-s)\xi^{\alpha}}\right)^2   \xi^{1-2H-\alpha}d\xi
     +c_{\alpha, H}(t-s)^{\frac {2H+\alpha-2}{\alpha}} \\
\leq&\, c_{H}\int_{0}^{\infty}\left(1-e^{-(t-s)\xi^{\alpha}}\right)  \xi^{1-2H-\alpha}d\xi
     +c_{\alpha, H}(t-s)^{\frac {2H+\alpha-2}{\alpha}}\\
=&\, \frac{c_{H}}{\alpha}(t-s)^{\frac {2H+\alpha-2}{\alpha}}\int_{0}^{\infty}
     \left(1-e^{-\eta}\right) \eta^{\frac{2-2H-2\alpha}{\alpha}}d\eta
     +c_{\alpha, H}(t-s)^{\frac {2H+\alpha-2}{\alpha}} \\
=&\, \left(\frac{c_{H}}{2H+\alpha-2}\Gamma\left(\frac{2-2H}{\alpha}\right)+c_{\alpha,H}\right)
      (t-s)^{\frac {2H+\alpha-2}{\alpha}}.
\end{split}
\end{equation}

Using the  Minkowski inequality, \eqref{u-L2}, \eqref{x}, and \eqref{t},    there exists a constant $c_{2,3}>0$ such that
\begin{align*}
d_1 ((t,x), (s,y))
\le &\,    d_1 ((s,x), (s,y))+d_1 ((t,x), (s,x))\notag\\
\le &\, c_{2,3}\lc |x-y|^{\frac{2H+\alpha-2}{2}}\wedge s^{\frac{2H+\alpha-2}{2\alpha}}+(t-s)^{\frac{2H+\alpha-2}{2\alpha}}\rc,
\end{align*}
which is the upper bound in  \eqref{equ-metric-d1}.

\noindent
\textbf{Lower bound.}
By \eqref{E-I1} and the change of variables $\tilde{\xi}:=|x-y|\xi$, we have
\begin{equation}\label{I-1}
\begin{split}
\EE\left[I_1^2\right]
\geq&\,  c_{H}\int_{0}^{\infty} \lc 1-e^{-2s\xi^{\alpha}}\rc
       \left[ 1-e^{-(t-s)\xi^{\alpha}}\cos(\xi(x-y))\right]^2   \xi^{1-2H-\alpha}d\xi  \nonumber\\
       =&\,  c_{H}|x-y|^{2H+\alpha-2}  \cdot  \int_{0}^{\infty}\lk 1-\exp\lc-\frac{2s\tilde{\xi}^{\alpha}}{|x-y|^{\alpha}}\rc\rk  \\
   & \ \ \ \ \ \ \ \ \ \ \ \ \ \ \ \ \ \  \cdot \lk 1-\exp\lc-\frac{(t-s)\tilde{\xi}^{\alpha}}{|x-y|^{\alpha}}\rc\cos\left(\tilde{\xi}\right) \rk^2     \tilde{\xi}^{1-2H-\alpha}  d\tilde{\xi}\\
   \geq&\,  c_{H}|x-y|^{2H+\alpha-2}  \cdot   \sum_{n=1}^{\infty}  \int_{2n\pi+\frac{\pi}{2}}^{2n\pi+\frac{3\pi}{2}}  \lk 1-\exp\lc-\frac{2s\xi^{\alpha}}{|x-y|^{\alpha}}\rc\rk  \xi^{1-2H-\alpha}  d\xi,
\end{split}
\end{equation}
where the last inequality holds because   $\cos(\xi)\le 0$  on   the  interval $\left[2n\pi+\frac{\pi}{2},2n\pi+\frac{3\pi}{2}\right]$.

    Next, we derive a lower bound of  the above series.   For  convenience, we consider the following two cases:
 $$
     |x-y|\le s^{\frac{1}{\alpha}}\qquad{\rm and}\qquad |x-y|> s^{\frac{1}{\alpha}}.
 $$

\noindent {\bf Case 1.}    When $|x-y|\leq s^{\frac{1}{\alpha}}$, we have
	\begin{align}
  \EE\left[I_1^2\right] \geq   c_{H}|x-y|^{2H+\alpha-2}  \cdot   \sum_{n=1}^{\infty} \int_{2n\pi+\frac{\pi}{2}}^{2n\pi+\frac{3\pi}{2}}\lc 1-e^{-2\xi^{\alpha}} \rc  \xi^{1-2H-\alpha} d\xi. \label{I-2}
	\end{align}
Here, the series is convergent because  $\alpha+2H>2$.

\noindent {\bf Case 2.} 
The case  $|x-y|>s^{\frac{1}{\alpha}}$ is a little complicated. Denote
\begin{align}\label{eq n0}
	 n_0:=\inf\left\{n\in \mathbb N:2n\pi+\frac{\pi}{2}\geq\frac{\pi}{2} \frac{|x-y|}{s^{1/\alpha}} \right\}.
\end{align}
Then,
\begin{equation}\label{n_0}
\frac{\pi}{2} \frac{|x-y|}{s^{1/\alpha}}\leq
2n_0\pi+\frac{\pi}{2}\leq\frac{\pi}{2} \frac{|x-y|}{s^{1/\alpha}}+2\pi.
\end{equation}
 Since   for all  $\xi \geq 2n_0\pi + \frac{\pi}{2}$,
\[
1 - \exp\left(-\frac{2s\xi^{\alpha}}{|x-y|^{\alpha}}\right) \geq 1 - \exp(-2^{1-\alpha}\pi^{\alpha}), 
\]
 we have
\begin{equation}\label{eq:mathcalI2}
\begin{split}
\EE\left[I_1^2\right] &\geq c_{H}|x-y|^{2H+\alpha-2} \sum_{n=n_0}^{\infty} \int_{2n\pi+\frac{\pi}{2}}^{2n\pi+\frac{3\pi}{2}}
\left[1 - \exp\left(-\frac{2s\xi^{\alpha}}{|x-y|^{\alpha}}\right)\right] \xi^{1-2H-\alpha} d\xi \\
&\geq c_{H} \left(1 - \exp(-2^{1-\alpha}\pi^{\alpha})\right) |x-y|^{2H+\alpha-2}
\sum_{n=n_0}^\infty \int_{2n\pi+\frac{\pi}{2}}^{2n\pi+\frac{3\pi}{2}} \xi^{1-2H-\alpha} d\xi \\
&\geq c_{2,4} |x-y|^{2H+\alpha-2} \left(2n_0\pi+\frac{\pi}{2}\right)^{2-2H-\alpha},
\end{split}	
\end{equation}
where   the last inequality follows from the monotonicity of $\xi^{1-2H-\alpha}$ on $(0, \infty)$, and 
$$c_{2,4} = \frac{c_{H}(1 - \exp(-2^{1-\alpha}\pi^{\alpha}))}{2(2H+\alpha-2)}.$$

 When $|x-y|>s^{\frac{1}{\alpha}}$, by
 \eqref{n_0}, we have
\begin{equation}\label{I-3}
\begin{split}
  \EE\left[I_1^2\right] \ge &\,  c_{2,4} |x-y|^{2H+\alpha-2}{\lc2n_0\pi+\frac{\pi}{2}\rc}^{2-2H-\alpha}\\
   \geq&\, c_{2,4}
   |x-y|^{2H+\alpha-2}{\lc\frac{\pi}{2}\cdot\frac{|x-y|}{s^{1/\alpha}}+2\pi\rc}^{2-2H-\alpha}  \\
=&\, c_{2,4}\lc\frac{\pi}{2}\cdot\frac 1 {s^{1/\alpha}}+2\pi\cdot\frac 1 {|x-y|}\rc^{2-2H-\alpha} \\
\ge &\, c_{2,4} \lc\frac{5\pi}{2}\rc^{2-2H-\alpha}s^{\frac{2H+\alpha-2}{\alpha}}.
\end{split}
\end{equation}
Putting  \eqref{d1 equality}, \eqref{E-I2},  \eqref{I-1}, and \eqref{I-3} together, we have
\begin{equation*}
d_1 ((t,x), (s,y))
\ge  c_{2,5}\lc |x-y|^{\frac{2H+\alpha-2}{2}}\wedge s^{\frac{2H+\alpha-2}{2\alpha}}+(t-s)^{\frac{2H+\alpha-2}{2\alpha}}\rc,
\end{equation*}
which is the lower bound in  \eqref{equ-metric-d1}.
The proof is complete.
\end{proof}

\subsection{Moment bounds for the spatial increment}
Recall the operator $ \Delta_h$ defined by \eqref{def:h:u} for the  spatial increment.  We now  give the sharp bounds for  its natural metric:
    \begin{equation}\label{eq d2}
    d_{2,t,h}(x,y):=\left(\EE\left[| \Delta _h u(t,x)- \Delta _h u(t,y)|^2\right]\right)^{\frac12}.
  \end{equation}
\begin{lemma}\label{lem 2space} 
 \begin{itemize}
 \item[(a)](Upper bound)
 For any $0\leq\theta\leq\frac{2H+\alpha-2}{2}$,
 there exists a positive constant $c_{2,6}$ such that  for all $t, h\ge 0$ and $x, y\in \mathbb R$,
   \begin{align}\label{U-h-Eu-3}
   d_{2, t,h}(x,y)\leq c_{2,6}h^{\theta}\lc|x-y|^{\frac{2H+\alpha-2-2\theta}{2}}\wedge t^{\frac{2H+\alpha-2-2\theta}{2\alpha}}\rc.
 \end{align}
 \item[(b)](Lower bound)
  There exists a positive constant $c_{2,7}$ such that for all $|x-y|\geq t^{\frac1{\alpha}}$
  and
   $h\le \min\left\{\frac{3}{64}|x-y|, \frac{\pi}{4}\lc\frac{2}{\log 2}\rc^{\frac{1}{\alpha}}t^{\frac{1}{\alpha}}\right\}$,
  \begin{equation}\label{U-h-dEu}
       d_{2, t,h}(x,y)\geq  c_{2,7}h^{\frac{2H+\alpha-2}{2}}.
  \end{equation}
   \end{itemize}
 \end{lemma}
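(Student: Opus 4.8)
The plan is to reduce both parts to a single spectral identity and then estimate one Fourier integral in two different ways. First I would express $\Delta_h u(t,x)-\Delta_h u(t,y)=u(t,x+h)-u(t,x)-u(t,y+h)+u(t,y)$ as a Wiener integral and apply the isometry \eqref{eq H product} together with \eqref{Fourier}. The four spatial exponentials factor neatly as $(e^{-i\xi h}-1)(e^{-i\xi x}-e^{-i\xi y})$, whose squared modulus is $4(1-\cos(\xi h))(1-\cos(\xi(x-y)))$. Carrying out the time integration via $\int_0^t e^{-2(t-s)|\xi|^\alpha}\,ds=\frac{1-e^{-2t|\xi|^\alpha}}{2|\xi|^\alpha}$ and using evenness in $\xi$, this produces the working identity
\[
d_{2,t,h}(x,y)^2=4c_{1,H}\,\mathcal I,\qquad
\mathcal I:=\int_0^\infty (1-e^{-2t\xi^\alpha})(1-\cos(\xi h))(1-\cos(\xi(x-y)))\,\xi^{1-2H-\alpha}\,d\xi,
\]
which is the common starting point for \eqref{U-h-Eu-3} and \eqref{U-h-dEu}.

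For the upper bound (a) I would spend the factor $1-\cos(\xi h)$ on extracting the power $h^{2\theta}$, using the elementary inequality $1-\cos v\le 2^{1-2\theta}|v|^{2\theta}$ (valid for $0\le 2\theta\le 2$). The leftover integral $J:=\int_0^\infty(1-e^{-2t\xi^\alpha})(1-\cos(\xi(x-y)))\,\xi^{2\theta+1-2H-\alpha}\,d\xi$ is exactly the pure spatial-increment integral \eqref{x} but with $H$ replaced by $H-\theta$. Writing $\gamma:=2H+\alpha-2-2\theta\in(0,1)$ for $\theta<\frac{2H+\alpha-2}2$, I would bound $J$ two ways: dropping $1-e^{-2t\xi^\alpha}\le1$ and invoking \eqref{cos} yields $J\le C|x-y|^\gamma$, whereas dropping $1-\cos\le2$ and rescaling $\eta=2t\xi^\alpha$ yields $J\le Ct^{\gamma/\alpha}$; together these give $J\le C(|x-y|^\gamma\wedge t^{\gamma/\alpha})$, which is \eqref{U-h-Eu-3}. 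The endpoint $\theta=\frac{2H+\alpha-2}2$ (where $\gamma=0$ and both integrals diverge) I would instead reach by subadditivity, $d_{2,t,h}(x,y)\le 2\,d_1((t,x+h),(t,x))$, which by Lemma~\ref{metric} is $\le C\,h^{(2H+\alpha-2)/2}$, matching the claim at that $\theta$.

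The lower bound (b) is where I expect the real difficulty. Rescaling $\xi=u/h$ turns the identity into $\mathcal I=h^{2H+\alpha-2}\int_0^\infty(1-e^{-2t(u/h)^\alpha})(1-\cos u)(1-\cos(u\rho))\,u^{1-2H-\alpha}\,du$ with $\rho:=(x-y)/h$, and the obstruction is the rapidly oscillating factor $1-\cos(u\rho)$, since $|\rho|$ is large. I would discard everything except $u\in[\pi/2,3\pi/2]$, where $1-\cos u\ge1$ and $u^{1-2H-\alpha}\ge(3\pi/2)^{1-2H-\alpha}$; the hypothesis $h\le\frac{\pi}{4}(\frac{2}{\ln2})^{1/\alpha}t^{1/\alpha}$ is tailored precisely so that $1-e^{-2t(u/h)^\alpha}\ge\frac12$ on this interval. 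The saving observation is that $1-\cos(u\rho)\ge0$, so the minimum $w_0>0$ of the remaining factors can be pulled out and the problem reduces to the explicit estimate $\int_{\pi/2}^{3\pi/2}(1-\cos(u\rho))\,du=\pi-O(1/|\rho|)$; since $h\le\frac{3}{64}|x-y|$ forces $|\rho|\ge\frac{64}{3}$, this stays at least $\pi-\frac{3}{32}>0$. Hence $\mathcal I\ge c\,h^{2H+\alpha-2}$ and \eqref{U-h-dEu} follows. The genuinely delicate part is the interplay of the two constraints on $h$: the one proportional to $t^{1/\alpha}$ keeps the exponential factor bounded below, while the one proportional to $|x-y|$ keeps the oscillatory integral from cancelling.
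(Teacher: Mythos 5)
Your proposal is correct, but it takes a genuinely different route from the paper's in both halves. For the upper bound the paper disposes of \eqref{U-h-Eu-3} in one line: write $\Delta_h u(t,x)-\Delta_h u(t,y)$ two ways, apply Minkowski and Lemma \ref{metric} to get both $d_{2,t,h}\lesssim h^{\frac{2H+\alpha-2}{2}}$ and $d_{2,t,h}\lesssim |x-y|^{\frac{2H+\alpha-2}{2}}\wedge t^{\frac{2H+\alpha-2}{2\alpha}}$, and interpolate between the two bounds; you perform essentially the same interpolation but at the level of the integrand, via $1-\cos v\le 2^{1-2\theta}|v|^{2\theta}$ inside the spectral identity, then estimate the residual integral $J$ with \eqref{cos} and a rescaling. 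Both are valid (your constants degenerate as $\theta\to\frac{2H+\alpha-2}{2}$, but the lemma allows $\theta$-dependent constants and you correctly patch the endpoint by subadditivity). For the lower bound the paper starts from the same identity $d_{2,t,h}^2=4c_{1,H}\mathcal I$ but rescales by $|x-y|$, restricts to $\xi\in\bigl[\tfrac{|x-y|\pi}{4h},\tfrac{|x-y|\pi}{2h}\bigr]$, uses the quadratic bound $1-\cos\bigl(\tfrac{h\xi}{|x-y|}\bigr)\ge\tfrac{h^2\xi^2}{4|x-y|^2}$ there, and then lower-bounds $\int(1-\cos\xi)\,\xi^{3-2H-\alpha}d\xi$ by counting the intervals $I_k$ on which $1-\cos\xi\ge\tfrac12$ (this is where $h\le\tfrac{3}{64}|x-y|$ enters, to guarantee $k_0<k_1$). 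You instead rescale by $h$, keep only $u\in[\pi/2,3\pi/2]$ where $1-\cos u\ge1$, and evaluate the rapidly oscillating factor exactly: $\int_{\pi/2}^{3\pi/2}(1-\cos(u\rho))\,du=\pi-O(1/|\rho|)\ge\pi-\tfrac{3}{32}$ once $|\rho|\ge\tfrac{64}{3}$. This swaps which of the two cosine factors is treated as "pointwise bounded below" versus "averaged over oscillations," uses both hypotheses on $h$ for exactly the purposes you identify, and entirely avoids the paper's combinatorial bookkeeping with $k_0,k_1$; it is, if anything, the cleaner argument. Your explicit computation of the exponential lower bound ($2t(\pi/2h)^\alpha\ge 2^\alpha\ln 2$ so the factor is $\ge 3/4$) also confirms that the hypothesis $|x-y|\ge t^{1/\alpha}$ is not actually needed for part (b), which the paper's route does not make visible.
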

\begin{proof} The upper bound in \eqref{U-h-Eu-3} follows directly from \eqref{def:h:u}, Lemma \ref{metric}, and the Minkowski inequality. Let us prove the lower bound in \eqref{U-h-dEu}.

 Assume    $|x-y|\geq t^{\frac1{\alpha}}$ and $h\le \frac{3}{64}|x-y|$.    By \eqref{eq H product} and \eqref{Fourier}, we have
\begin{equation}
\begin{split}
d_{2,t,h}^2(x,y)
=&\, c_{H}\int_0^t\int_{\mathbb{R}} \left|
\mathcal{F} G_{\alpha}(t-s, x+h-\cdot)(\xi)-\mathcal{F} G_{\alpha}(t-s, x-\cdot)(\xi) \right. \\
& \quad\quad  \left. -\mathcal{F} G_{\alpha}(t-s, y+h-\cdot)(\xi) +\mathcal{F} G_{\alpha}(t-s, y-\cdot)(\xi) \right|^2  |\xi|^{1-2H}d\xi ds\\
=&\, c_{H}\int_0^t  \int_{\mathbb{R}} e^{-2(t-s)|\xi|^{\alpha}}  \left| 1 - e^{-ih\xi} \right|^2
   \left| 1 - e^{-i(x - y)\xi} \right|^2 |\xi|^{1-2H}d\xi  ds\\
=&\, 4c_{H}\int_{\mathbb{R}_+}\left( 1 - e^{-2t\xi^{\alpha}}\right)
   \left[ 1 - \cos(h\xi)\right]   \left[ 1 - \cos(|x - y|\xi)\right]  \xi^{1-2H - \alpha}d\xi.\label{eq-d2}
\end{split}
\end{equation}
Note that
for any $\xi\ge\frac{\pi|x-y|}{4h}$ and $h\leq\frac{\pi}{4}\lc\frac{2}{\log 2}\rc^{\frac{1}{\alpha}}t^{\frac{1}{\alpha}}$,
 \[
 1-\exp\lc-\frac{2t\xi^{\alpha}}{|x-y|^{\alpha}} \rc\geq \frac 12.
 \]
The elementary inequality
\begin{align}\label{eq elem}
1-\cos(x)\ge \frac{x^2}{4} \ \ \ \text{for } |x|\le \frac{\pi}{2},
\end{align}
implies that  for any $0<\xi\le \frac{|x-y|\pi}{2h}$,
\[
 1-\cos \left(\frac{h\xi}{|x-y|}\right)\ge
  \frac{h^2\xi^2}{4|x-y|^2}.
 \]
Therefore, by \eqref{eq-d2} and the change of variables $\tilde{\xi}:=|x-y|\xi$, we have
  \begin{equation}\label{d2:h}
    \begin{split}
       d_{2, t,h}^2(x,y)
        =&\, 4c_{H} |x-y|^{2H+\alpha-2} \int_{\RR_+}\lt[1-\exp\lc-\frac{2t\tilde{\xi}^{\alpha}}{|x-y|^{\alpha}}\rc\rt]\lt[1-\cos\lc \frac{h\tilde{\xi}}{|x-y|}\rc\rt] \\
        &\ \ \ \ \ \ \ \ \ \ \ \ \ \ \ \ \ \ \  \ \ \ \ \ \ \ \ \ \ \ \cdot\left[1-\cos\left(\tilde{\xi}\right)\right] \tilde{\xi}^{1-2H-\alpha}d\tilde{\xi} \\
        \geq&\,  2c_{H} |x-y|^{2H+\alpha-2}\int_{\frac{|x-y|\pi}{4h}}^{\infty} \lt[1-\cos\lc \frac{h\xi}{|x-y|}\rc\rt][1-\cos(\xi)] \xi^{1-2H-\alpha}d\xi \\
        \geq&\, \frac{c_{H}}{2} h^2|x-y|^{2H+\alpha-4}
        \int_{\frac{|x-y|\pi}{4h}}^{\frac{|x-y|\pi}{2h}} [1-\cos(\xi)] \xi^{3-2H-\alpha}d\xi.
    \end{split}
  \end{equation}

Define
\begin{align*}
   k_0:= &\, \inf \left\{k\in\mathbb Z_+:\frac{(6k+1)\pi}{3}\geq \frac{|x-y|\pi}{4h}\rt\},\\
      k_1:=&\, \sup\lt\{k\in\mathbb Z_+:\frac{(6k+5)\pi}{3}\leq \frac{|x-y|\pi}{2h}\rt\}.
  \end{align*}
  Equivalently,
$$
k_0=\Bigg\lfloor\frac{\frac{3|x-y|}{4h}-1}{6}\Bigg\rfloor+1,\ \ \
k_1=\Bigg\lfloor\frac{\frac{3|x-y|}{2h}-5}{6}\Bigg\rfloor.
$$
When $h\leq\frac3{64}|x-y|$,   we have  $k_0< k_1$. Define  
   $$I_k:= \, \left(\frac{(6k+1)\pi}{3},\frac{(6k+5)\pi}{3}\right].$$
 Then,
$$
\bigcup_{k=k_0}^{k_1}I_k\subseteq\left[\frac{|x-y|\pi}{4h},\frac{|x-y|\pi}{2h}\right].
$$
 Consequently,  we have
\begin{equation}\label{L-h-dEu0}
\begin{split}
\int_{\frac{|x-y|\pi}{4h}}^{\frac{|x-y|\pi}{2h}} \bigl(1-\cos\xi\bigr)  \xi^{3-2H-\alpha} \,d\xi
&\geq \sum_{k=k_0}^{k_1} \int_{I_k} \bigl(1-\cos\xi\bigr) \xi^{3-2H-\alpha} \,d\xi \\
&\geq \frac{1}{2}\sum_{k=k_0}^{k_1} \int_{I_k} \xi^{3-2H-\alpha} \,d\xi \\
&\geq \frac{1}{4} \int_{\frac{(6k_0+1)\pi}{3}}^{\frac{(6k_1+5)\pi}{3}} \xi^{3-2H-\alpha} \,d\xi.
\end{split}
\end{equation}
Here,   the  fact that $1 - \cos\xi \geq \frac{1}{2}$  for  $\xi\in I_k$  is used in the second inequality,
  and    the monotonicity of $\xi^{3-2H-\alpha}$ is used in the last step.

 From the definitions of $k_0$ and $k_1$,   when $h \leq \frac{3}{64}|x-y|$,
\begin{align*}
\frac{(6k_0+1)}{3} &\leq \frac{|x-y|}{4h} + 2 \leq \frac{11|x-y|}{32h},\\
\frac{(6k_1+5)}{3} &> \frac{|x-y|}{2h} - 2 \geq \frac{13|x-y|}{32h}. 
\end{align*}
Consequently, we have
\begin{equation}\label{L-h-dEu0-2}
\begin{split}
 \int_{\frac{(6k_0+1)\pi}{3}}^{\frac{(6k_1+5)\pi}{3}} \xi^{3-2H-\alpha}  d\xi 
= &\,  \frac{1}{4-2H-\alpha} \left[ \left(\frac{(6k_1+5)\pi}{3}\right)^{4-2H-\alpha} - \left(\frac{(6k_0+1)\pi}{3}\right)^{4-2H-\alpha} \right] \\
 \geq &\,  \frac{\pi^{4-2H-\alpha}}{4-2H-\alpha} \left[ \left(\frac{13}{32}\right)^{4-2H-\alpha} - \left(\frac{11}{32}\right)^{4-2H-\alpha} \right] \left(\frac{|x-y|}{h}\right)^{4-2H-\alpha}.
\end{split}
\end{equation}
From \eqref{d2:h}, \eqref{L-h-dEu0}, and \eqref{L-h-dEu0-2}, we obtain \eqref{U-h-dEu}. The proof is complete.
\end{proof}

\subsection{Moment bounds for the temporal increment}
Recall the temporal difference operator $\mathcal D_{\tau}$ defined in \eqref{t:tau}.   In this part,  we establish sharp bounds for its canonical metric defined by
\begin{align}\label{eq:d3}
d_{3,t,\tau}(x,y)  := \left(\mathbb{E}\left[|\mathcal D_{\tau} u(t,x) - \mathcal D_{\tau} u(t,y)|^2\right]\right)^{1/2},
\end{align}
where $x,y \in \mathbb{R}$ represent spatial coordinates and $t, \tau > 0$ denote time parameters.

 \begin{lemma}\label{lem:diff3} 
\begin{itemize}
\item[(a)] (Upper bound)   There exists a positive constant $c_{2,8}$ such that for all   $0 \leq \theta \leq \frac{2H + \alpha - 2}{\alpha}$,  $x, y \in \mathbb{R}$ and   $0 < \tau \leq t$,
 \begin{align}\label{U-h-dEu 3-0}
   d_{3, t,\tau}(x,y)\leq c_{2,8} \tau^{\frac{\theta}{2}} \left(|x-y|^{\frac{2H+\alpha-2-\alpha\theta}{2}}\wedge t^{\frac{2H+\alpha-2-\alpha\theta}{2\alpha}}\right).
 \end{align}
 \item[(b)] (Lower bound)
 There exists a positive constant $c_{2,9}$ such that for all $|x-y|\geq t^{\frac1{\alpha}}$
  and $0 < \tau \leq \left(\frac{3}{32}\right)^{\alpha}|x-y|^{\alpha}$,
\begin{equation}\label{U-h-dEu 3}
       d_{3, t,\tau}(x,y)\geq  c_{2,9} \tau^{\frac{2H+\alpha-2}{2\alpha}}.
  \end{equation}
  \end{itemize}
  \end{lemma}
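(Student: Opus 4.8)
The plan is to obtain the upper bound in (a) by interpolating two coarse estimates that each follow from Lemma~\ref{metric}, and to obtain the lower bound in (b) from the spectral representation of $d_{3,t,\tau}^2$ together with a frequency-localization argument. For (a), applying the triangle inequality to $\mathcal D_\tau u(t,x)-\mathcal D_\tau u(t,y)$ termwise gives
\[
 d_{3,t,\tau}(x,y)\le \|\mathcal D_\tau u(t,x)\|_{L^2(\Omega)}+\|\mathcal D_\tau u(t,y)\|_{L^2(\Omega)}\le 2c_{2,3}\,\tau^{\frac{2H+\alpha-2}{2\alpha}},
\]
since each summand is a value of $d_1$ between points with equal spatial coordinate. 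Rewriting the same difference as $[u(t+\tau,x)-u(t+\tau,y)]-[u(t,x)-u(t,y)]$ and using $0<\tau\le t$ (so that $(t+\tau)^{\frac{2H+\alpha-2}{2\alpha}}\le 2^{\frac{2H+\alpha-2}{2\alpha}}t^{\frac{2H+\alpha-2}{2\alpha}}$) gives
\[
 d_{3,t,\tau}(x,y)\le d_1((t+\tau,x),(t+\tau,y))+d_1((t,x),(t,y))\le c\,\Big(|x-y|^{\frac{2H+\alpha-2}{2}}\wedge t^{\frac{2H+\alpha-2}{2\alpha}}\Big).
\]

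Because a nonnegative quantity dominated by both $A$ and $B$ is dominated by $A^{\lambda}B^{1-\lambda}$ for every $\lambda\in[0,1]$, I would take $A,B$ to be the right-hand sides of the two displays above and $\lambda=\frac{\alpha\theta}{2H+\alpha-2}$; this $\lambda$ lies in $[0,1]$ precisely when $0\le\theta\le\frac{2H+\alpha-2}{\alpha}$, and a direct check shows the resulting exponents are $\tau^{\theta/2}$, $|x-y|^{\frac{2H+\alpha-2-\alpha\theta}{2}}$ and $t^{\frac{2H+\alpha-2-\alpha\theta}{2\alpha}}$, which is exactly \eqref{U-h-dEu 3-0}.

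For (b) I would compute $d_{3,t,\tau}^2$ exactly as in \eqref{eq-d2}: the spatial Fourier factor of $\mathcal D_\tau u(t,x)-\mathcal D_\tau u(t,y)$ is $(e^{-i\xi x}-e^{-i\xi y})$ times $e^{-(t+\tau-s)|\xi|^\alpha}\mathbf 1_{[0,t+\tau]}-e^{-(t-s)|\xi|^\alpha}\mathbf 1_{[0,t]}$, and integrating in $s$ over $[0,t]$ and $[t,t+\tau]$ separately yields
\[
 d_{3,t,\tau}^2(x,y)=2c_{1,H}\int_0^\infty (1-\cos(\xi(x-y)))\,\xi^{1-2H-\alpha}\Big[(1-e^{-\tau\xi^\alpha})^2(1-e^{-2t\xi^\alpha})+(1-e^{-2\tau\xi^\alpha})\Big]d\xi.
\]
Both bracketed terms are nonnegative, so I would discard the first and keep only $(1-e^{-2\tau\xi^\alpha})$; substituting $\eta=\xi|x-y|$ and writing $\rho:=\tau/|x-y|^\alpha\le(3/32)^\alpha$ reduces the claim to the scale-free lower bound
\[
 \int_0^\infty (1-\cos\eta)\,\eta^{1-2H-\alpha}\big(1-e^{-2\rho\eta^\alpha}\big)\,d\eta\ \ge\ c\,\rho^{\frac{2H+\alpha-2}{\alpha}}.
\]

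To establish this I would localize in frequency as in Lemma~\ref{lem 2space}(b): on $\eta\ge\rho^{-1/\alpha}$ one has $1-e^{-2\rho\eta^\alpha}\ge 1-e^{-2}$, and on the intervals $I_k=(\tfrac{(6k+1)\pi}{3},\tfrac{(6k+5)\pi}{3}]$ one has $1-\cos\eta\ge\tfrac12$, so the integral is at least a constant times $\sum_{k\ge k_0}\int_{I_k}\eta^{1-2H-\alpha}d\eta$, where $k_0$ is the first index placing $I_k$ beyond $\rho^{-1/\alpha}$. Since $2H+\alpha-2>0$ this sum is comparable to $\int_{\rho^{-1/\alpha}}^\infty\eta^{1-2H-\alpha}d\eta$, a constant multiple of $\rho^{\frac{2H+\alpha-2}{\alpha}}$; undoing the rescaling gives $d_{3,t,\tau}^2(x,y)\ge c\,|x-y|^{2H+\alpha-2}\rho^{\frac{2H+\alpha-2}{\alpha}}=c\,\tau^{\frac{2H+\alpha-2}{\alpha}}$, i.e.\ \eqref{U-h-dEu 3}. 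The main obstacle is this lower bound: the oscillation $1-\cos\eta$ forces the interval-summation scheme, and the threshold $\rho^{-1/\alpha}$ must simultaneously lie where $1-e^{-2\rho\eta^\alpha}$ is bounded below and leave room for enough full periods $I_k$; the constant $(3/32)^\alpha$ (playing the role of $\tfrac3{64}$ in Lemma~\ref{lem 2space}(b)) is exactly what makes these two requirements compatible. The hypothesis $|x-y|\ge t^{1/\alpha}$ plays no role once the $t$-dependent term is discarded and is retained only for uniformity with the application in Theorem~\ref{Thm3.3}.
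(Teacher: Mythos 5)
Your proposal is correct and follows essentially the same route as the paper: part (a) is exactly the interpolation of the two $d_1$-based bounds that the paper compresses into ``Lemma~\ref{metric} and the Minkowski inequality,'' and part (b) uses the same reduction to the increment over $[t,t+\tau]$ (via independence of the time slices), the same rescaling $\eta=\xi|x-y|$, and the same periodic localization onto intervals where $1-\cos\eta\ge\tfrac12$. The only (harmless) difference is the bookkeeping in (b): the paper restricts to the single band $\left[\frac{|x-y|\pi}{2\tau^{1/\alpha}},\frac{|x-y|\pi}{\tau^{1/\alpha}}\right]$ and inserts the factor $\tau^{2/\alpha}\xi^2/(16|x-y|^2)\le 1$ so as to reuse \eqref{L-h-dEu0}--\eqref{L-h-dEu0-2}, whereas you integrate the decreasing tail over all of $[\rho^{-1/\alpha},\infty)$ directly; both give $c\,\tau^{\frac{2H+\alpha-2}{\alpha}}$, and your remark that $|x-y|\ge t^{1/\alpha}$ is not actually used in the lower bound is consistent with the paper's computation.
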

 \begin{proof}
  The upper bound in  \eqref{U-h-dEu 3-0}   follows immediately from Lemma \ref{metric} and the Minkowski inequality. Now, we     prove  the lower bound  \eqref{U-h-dEu 3}.

Using the Fourier representation \eqref{eq H product} and \eqref{Fourier}, along with the independence of stochastic integrals over $[0,t]$ and $[t,t+\tau]$, we have
      \begin{equation}\label{eq-d3}
\begin{split}
d_{3, t,\tau}^2(x,y)
\ge&\, \mathbb{E} \Bigg[ \left| \int_{t}^{t+\tau}\int_{\mathbb{R}} \left[ G_{\alpha}(t+\tau - s,x - z) - G_{\alpha}(t+\tau - s,y - z) \right] W(ds,dz)\right|^2 \Bigg] \\
=&\,  c_{H}\int_t^{t+\tau}\int_{\RR}  \lt| 1-e^{i(x-y)\xi} \rt|^2   e^{-2(t+\tau-s)|\xi|^{\alpha}}    |\xi|^{1-2H} d\xi ds  \\
=&\, 2c_{H}\int_{\RR_+}\lk 1-e^{-2\tau\xi^{\alpha}}\rk\lk 1-\cos(\xi|x-y|)\rk \xi^{1-2H-\alpha}d\xi\\
=&\, 2c_{H}|x-y|^{2H+\alpha-2}\int_{\RR_+}\lk 1-\exp\lc-\frac{2\tau\tilde{\xi}^{\alpha}}{|x-y|^{\alpha}}\rc\rk
\lk 1-\cos(\tilde{\xi})\rk \tilde{\xi}^{1-2H-\alpha}d\tilde\xi\\
\ge &\, 2c_{H}|x-y|^{2H+\alpha-2} \int_{\frac{|x-y|\pi}{2\tau^{1/\alpha}}}^{\frac{|x-y|\pi}{\tau^{1/\alpha}}}\lk 1-\exp\lc-\frac{2\tau\xi^{\alpha}}{|x-y|^{\alpha}}\rc\rk
\lk 1-\cos(\xi)\rk \xi^{1-2H-\alpha}d\xi\\
\ge &\,   2  c_{H} \left(1- \exp\left(-2^{1-\alpha}\pi^{\alpha}\right)\right) |x-y|^{2H+\alpha-2} \int_{\frac{|x-y|\pi}{2\tau^{1/\alpha}}}^{\frac{|x-y|\pi}{\tau^{1/\alpha}}} \lk 1-\cos(\xi)\rk \xi^{1-2H-\alpha}d\xi,
 \end{split}
\end{equation}
 where the change of variables   $\xi \mapsto \tilde \xi/|x-y|$  is used in the  third-last step, and the elementary inequality
  \[
    1 - \exp\left(-\frac{2\tau\xi^\alpha}{|x-y|^\alpha}\right) \geq
    1- \exp\left(-2^{1-\alpha}\pi^{\alpha}\right),\ \ \text{if}\ \ \xi\geq \frac{|x-y|\pi}{2\tau^{1/\alpha}},
    \]
is used   in the last step.

For any  $  \xi \in  \left[\frac{|x-y|\pi}{2\tau^{1/\alpha}},  \frac{|x-y|\pi}{\tau^{1/\alpha}} \right]$, by \eqref{eq elem},  we have
    \[
  1 \ge    1 - \cos\left(\frac{\tau^{1/\alpha}\xi}{2|x-y|}\right) \ge \frac{\tau^{2/\alpha}\xi^2}{16|x-y|^2}.
    \]
    Consequently, when $\tau \leq \left(\frac{3}{32}\right)^\alpha   |x-y|^{\alpha}$, it holds that
     \begin{equation}\label{h-tau}
\begin{split}
   \int_{\frac{|x-y|\pi}{2\tau^{1/\alpha}}}^{\frac{|x-y|\pi}{\tau^{1/\alpha}}} \left( 1-\cos(\xi)\right) \xi^{1-2H-\alpha}d\xi
\ge &\,  \frac{1}{16}\tau^{\frac{2}{\alpha}}|x-y|^{-2}
\int_{\frac{|x-y|\pi}{2\tau^{1/\alpha}}}^{\frac{|x-y|\pi}{\tau^{1/\alpha}}}
\left( 1-\cos(\xi)\right) \xi^{3-2H-\alpha}d\xi\\
\geq&\, c_{2,10} \tau^{\frac{2H+\alpha-2}{\alpha}}|x-y|^{2-2H-\alpha},
      \end{split}
\end{equation}
   where $c_{2,10}>0$, and the last inequality follows from \eqref{L-h-dEu0} and \eqref{L-h-dEu0-2}.

Combining \eqref{eq-d3} and \eqref{h-tau}, we obtain \eqref{U-h-dEu 3}. The proof is complete.
 \end{proof}

\section{Proof of main results}
\subsection{Proof of Theorem \ref{Thm3.1}}
 \begin{proof}[Proof of Theorem \ref{Thm3.1}]
The proof follows the strategy of \cite[Theorem 1.1]{HW2022}. It hinges on two key probabilistic tools: Talagrand's majorizing measure theorem (Theorem \ref{Talagrand}) for the upper bound, and Sudakov's minoration theorem (Theorem \ref{Sudakov}) for the lower bound.

For  brevity,  let
  \[
 \bT:=[0,T]\quad{\rm and}\quad  \bL:=\LL.
\]

 \noindent \textbf{Upper bound.} We begin by constructing a dyadic partition  of the parameter space $\bT\times \bL$. For each integer $n \geq 1$, we define
\begin{itemize}
 \item  Temporal partition:
\begin{equation}\label{partition:T}
[0,T] = \bigcup_{j=0}^{2^{2^{n-1}}-1} \Bigl[j\cdot 2^{-2^{n-1}}T,\ (j+1)\cdot 2^{-2^{n-1}}T\Bigr).
\end{equation}
 \item  Spatial partition:
\begin{equation}\label{partition:L}
 [-L,L]=\bigcup\limits_{k=-2^{2^{n-2}}}^{2^{2^{n-2}}-1}\lt[k\cdot2^{-2^{n-2}}L,(k+1)\cdot2^{-2^{n-2}}L\rt).
\end{equation}
\end{itemize}
For any $(t,x)$ such  that
\begin{align*}
j  2^{-2^{n-1}}T  \leq t < (j+1)  2^{-2^{n-1}}T, \,\, \,
k  2^{-2^{n-2}}L  \leq x < (k+1)  2^{-2^{n-2}}L,
\end{align*}
we define
\[
A_n(t,x) := \Bigl[j 2^{-2^{n-1}}T,\ (j+1) 2^{-2^{n-1}}T\Bigr) \times \Bigl[k 2^{-2^{n-2}}L,\ (k+1)  2^{-2^{n-2}}L\Bigr).
\]

 By Theorem \ref{Talagrand}, we have
\begin{equation}\label{diam An-1}
    \mathbb{E}\left[\sup_{(t,x)\in\bT \times\bL} u(t,x)\right]
    \leq c_{3,1}\sup_{(t,x)\in\bT \times\bL} \sum_{n=0}^\infty 2^{n/2} \mathrm{diam}(A_n(t,x)),
\end{equation}
where $\mathrm{diam}(A_n(t,x))$ denotes the diameter  with respect to the metric $d_1$ defined in \eqref{eq d1}.

By  Lemma \ref{metric}, we obtain the following uniform bound for all $t \in [0,T]$ and $x \in \mathbb{R}$:
\begin{equation}\label{diam An-2}
\begin{split}
    \mathrm{diam}(A_n(t,x))
    \leq c_{2,2}\Bigl[\left(2^{-\left(\frac{2H+\alpha-2}{2}\right)2^{n-2}} L^{\frac{2H+\alpha-2}{2}}\right)
     \wedge T^{\frac{2H+\alpha-2}{2\alpha}}
    + 2^{-\left(\frac{2H+\alpha-2}{2\alpha}\right)2^{n-1}} T^{\frac{2H+\alpha-2}{2\alpha}}\Bigr].
\end{split}
\end{equation}

We proceed by considering two distinct cases, depending on the relationship between the spatial and temporal scales:
$$
     L < T^{1/\alpha}\qquad{\rm and}\qquad L \geq T^{1/\alpha}.
$$
\noindent {\bf Case 1.}   When $L<T^{\frac{1}{\alpha}}$, by \eqref{diam An-1} and \eqref{diam An-2}, we have
 \begin{equation}\label{upper-Eu-1}
 \begin{split}
  \EE \lk\sup_{(t,x)\in\bT\times\bL}u(t,x)\rk
  \leq&\, c_{3,2}T^{\frac{2H+\alpha-2}{2\alpha}}.
  \end{split}
\end{equation}
\noindent {\bf Case 2.}   When $L\geq T^{\frac{1}{\alpha}}$,
let
\begin{equation}\label{eq N0}
\begin{split}
N_0:=&\, \inf\left\{n\geq2: 2^{-2^{n-2}} \leq \frac{T^{\frac{1}{\alpha}}}{L}\wedge \frac12 \right\}\\
=&\, \inf\left\{n\geq2: 2^{n-2}\geq \log_2\left(\frac{L}{T^{\frac{1}{\alpha}}} \vee 2\right)\right\}.
\end{split}
\end{equation}
By \eqref{diam An-2} and the definition of $N_0$, we have
\begin{equation}\label{n=N0+1}
\begin{split}
&\sum_{n= N_0+1}^{\infty}2^{\frac{n}{2}} \diam(A_n(t,x)) \\
\leq&\, c_{2,2}\left(
\sum_{n= N_0+1}^{\infty}2^{\frac{n}{2}}\cdot 2^{-\lc \frac{2H+\alpha-2}{2}\rc \cdot 2^{n-2}} L^{\frac{2H+\alpha-2}{2}}
+\sum_{n= N_0+1}^{\infty}2^{\frac{n}{2}}\cdot
2^{-\lc\frac{2H+\alpha-2}{2\alpha}\rc  2^{n-1}} T^{\frac{2H+\alpha-2}{2\alpha}}\right) \\
\leq&\, c_{2,2}T^{\frac{2H+\alpha-2}{2\alpha}} \left[
\sum_{n= N_0+1}^{\infty}2^{\frac{n}{2}}
\left(\frac{2^{2^{N_0-2}}}{2^{2^{n-2}}}\right)^{\frac{2H+\alpha-2}{2}}
     + \sum_{n= N_0+1}^{\infty} 2^{\frac{n}{2}-\lc\frac{2H+\alpha-2}{2\alpha}\rc 2^{n-1}}\right] \\
\leq&\, c_{3,3}T^{\frac{2H+\alpha-2}{2\alpha}}.
  \end{split}
  \end{equation}
By the definitions  of $\Psi(T, L)$  in  \eqref{Thm3.1-2} and $N_0$ in \eqref{eq N0}, we have  
\begin{equation}\label{N0-upper}
2^{\frac{N_0}{2}}\leq2^{\frac32}\Psi(T, L).
\end{equation}
This, together with  \eqref{diam An-1} and \eqref{n=N0+1}, implies that
\begin{equation}\label{upper-Eu-2}
\begin{split}
 & \EE \lk\sup_{(t,x)\in\bT\times\bL}u(t,x)\rk\\
  \leq &\,   c_{3,1}\sup_{(t,x)\in\bT\times\bL}\sum_{n=0}^{N_0}2^{\frac{n}{2}}\diam(A_n(t,x))
    +c_{3,1} \sup_{(t,x)\in\bT\times\bL}\sum_{n= N_0+1}^{\infty}2^{\frac{n}{2}} \diam(A_n(t,x)) \\
    \leq& \, 2\sqrt{2}\lc\sqrt{2}+1\rc c_{3,1}c_{2, 2}T^{\frac{2H+\alpha-2}{2\alpha}}2^{\frac{N_0}{2}}
    +c_{3,1}c_{3,3}T^{\frac{2H+\alpha-2}{2\alpha}} \\
  \leq &\, \left(2^{\frac52}\sqrt{2}\lc\sqrt{2}+1\rc c_{3,1}c_{2,2}+c_{3,1}c_{3,3}\right) T^{\frac{2H+\alpha-2}{2\alpha}}\Psi(T, L).
  \end{split}
  \end{equation}

From \eqref{upper-Eu-1} and \eqref{upper-Eu-2}, we obtain the upper bound in \eqref{Thm3.1-1}.

\noindent \textbf{Lower bound.}
 When $L\geq T^{\frac{1}{\alpha}}$, we consider the sequence
$\left\{u(T,x_i),i=0,1,\cdots,\pm \Big\lfloor L/T^{\frac{1}{\alpha}}\Big\rfloor\right\}$, where
  \[
   x_j:=jT^{\frac{1}{\alpha}},\ \ \text{for}\ \ j=0,\pm1,\cdots,\pm\Big\lfloor L/T^{\frac{1}{\alpha}}\Big\rfloor.
  \]
By \eqref{equ-metric-d1}, we have that for any  $i\neq j$,
\[
   d_{1}((T,x_i),(T,x_j))\geq c_{2,1} T^{\frac{2H+\alpha-2}{2\alpha}}.
\]
Applying Theorem \ref{Sudakov}  yields
  \begin{equation}\label{lower-Eu-1}
  \begin{split}
     \EE \lk \sup_{(t,x)\in\bT\times\bL}u(t,x)\rk
     \geq&\, \EE \lk
     \sup_{-\big\lfloor L/T^{\frac{1}{\alpha}}\big\rfloor\le i\le\big\lfloor L/T^{\frac{1}{\alpha}}\big\rfloor}u(T,x_i)\rk\\ 
     \geq &\, \frac12 c_{A,1}c_{2,1} T^{\frac{2H+\alpha-2}{2\alpha}}\Psi(T,L).
  \end{split}
  \end{equation}

When $L< T^{\frac{1}{\alpha}}$, by \eqref{equ-metric-d1} and  Theorem \ref{Sudakov}, we have
\begin{equation}\label{lower-Eu-2}
\EE\lk\sup_{(t,x)\in\bT\times\bL}u(t,x)\rk\geq\EE\left[u\left(\frac{T}{2},0\right)\vee u(T,0)\right]\geq c_{A,1}c_{2,1} T^{\frac{2H+\alpha-2}{2\alpha}}.
\end{equation}
From \eqref{lower-Eu-1} and \eqref{lower-Eu-2}, 
we obtain  the lower bound in  \eqref{Thm3.1-1}.

The proof of \eqref{Thm3.1-5} is identical to that  of   \eqref{Thm3.1-1}  with the necessary modifications, and is omitted. This completes the proof.
\end{proof}

\subsection{Proof of Proposition \ref{cor1}}
\begin{proof}[Proof of Proposition \ref{cor1}]
 \noindent\textbf{(a)} Recall   $\Upsilon(T,\delta)$ given by \eqref{eq Up} for any $T, \delta > 0$. Define
\begin{align*}
f(T):=&\, \sup_{(t,x)\in\Upsilon(T,\delta)} u(t,x),\\ 
g(T):= &\, T^{\frac{2H+\alpha-2}{2\alpha}}\left( 1+\sqrt{\log_2\left(  T^{\frac{\delta}{\alpha}} \right)} \right).
\end{align*}
Given $\varepsilon>0$, we choose a subsequence $T(n)=n^{\beta}$ for some
$\beta>\frac{2\alpha c_{\alpha,H}\log 2 }{\delta c_{1,1}^2\varepsilon^2}$.
By \eqref{Thm3.1-1} and \eqref{u-L2}, we have
\begin{align*}
 \lambda_{H,\alpha}(n):=\EE \lk f\left(n^\beta\right)\rk
     \geq c_{1,1}g\left(n^\beta\right),
\end{align*}
    and
\begin{align*}
	\sigma^2_{H,\alpha}(n):= \, \sup_{(t,x)\in\Upsilon\left(n^\beta,\delta\right)}
     \EE\left[|u (t,x)|^2\right]
     = c_{\alpha,H}n^{\beta\lc \frac{2H+\alpha-2}{\alpha}\rc}.
\end{align*}
By Borell's inequality (Theorem \ref{Borell}), we obtain that for any $\varepsilon>0$,
\begin{equation*}
\begin{split}
   \bp \left(  \left|f\left(n^\beta\right)-
   \EE\left[ f\left(n^\beta\right) \right]  \right|
   \geq\varepsilon \EE\left[ f\left(n^\beta\right) \right]  \right)  
\leq  &\,  2  \exp\left( -\frac{\left(\varepsilon\lambda_{H,\alpha}(n)\right)^2}{2\sigma^2_{H,\alpha}(n)}\right) \\
\leq & \, 2 n^{-{ \frac{\beta\delta}{\alpha}\cdot\frac{c_{1,1}^2\varepsilon^2}{2c_{\alpha,H}\log 2}}}.
\end{split}
\end{equation*}
By Borel-Cantelli's lemma,  we have almost surely
\begin{equation}\label{lower-u-1}
\limsup_{n \rightarrow\infty}
\frac {  \left|f\left(n^\beta\right)-\EE\left[ f\left(n^\beta\right) \right]  \right| }
 {\EE\left[f\left(n^\beta\right) \right] }\leq \varepsilon.
	\end{equation}
	The monotonicity of the functions  $f$ and $g$ allows us to derive \eqref{Thm3.1-3} via standard interpolation techniques, using the bounds provided in  \eqref{Thm3.1-1} and \eqref{lower-u-1}.

\noindent\textbf{(b)}
 For any $\varepsilon>0$, we consider a subsequence $L^{\beta}(n)=n^\beta$ for some $\beta>\frac{2c_{\alpha,H}\log 2}{c_{1,3}^2\varepsilon^2}$ and denote $$\bL^\beta(n):=\left[-n^\beta,n^\beta\right].$$
 By
 \eqref {Thm3.1-5}, we have
\begin{align*}
\EE \lk\sup_{x\in\bL^\beta(n)} u(t,x)\rk
     \geq c_{1,3}t^{\frac{2H+\alpha-2}{2\alpha}}
     \lc 1+\sqrt{\log_2{\lc n^\beta/t^{\frac{1}{\alpha}}\rc}}\rc.
\end{align*}
The result \eqref{cor-1} follows by adapting the proof of \eqref{Thm3.1-3}, with minor modifications. Details are omitted for brevity.

 \noindent\textbf{(c)}  We prove  part (c) by verifying the conditions in Theorem \ref{Asymptotic}. 
 
 Let $\rho_1(x,y)$ denote the correlation function between $u(t,x)$ and $u(t,y)$, that is
\[
\rho_1(x,y): = \frac{\mathbb{E}[u(t,x) u(t,y)]}{\sqrt{\mathbb{E}[u(t,x)^2] \cdot \mathbb{E}[u(t,y)^2]}}.
\]
By the spatial stationarity of $\{u(t, x)\}_{x \in \mathbb{R}}$, $\rho_1(x,y)$ has the following representation in terms of $L^2$-increments:
\begin{equation}\label{rho1:increment}
\rho_1(x,y) = 1 - \frac{\|u(t,x) - u(t,y)\|_{L^2(\Omega)}^2}{2\mathbb{E}[u(t,x)^2]}.
\end{equation}
Combining Lemma \ref{metric} with \eqref{rho1:increment} yields,    for any $0 < |x - y| < t^{1/\alpha}$,
\begin{equation}\label{rho1:bounds}
1 - \frac{c_{2,2}^2}{2\|u(t,x)\|_{L^2(\Omega)}^2} |x - y|^{2H + \alpha - 2}
\leq \rho_1(x,y) \leq
1 - \frac{c_{2,1}^2}{2\|u(t,x)\|_{L^2(\Omega)}^2} |x - y|^{2H + \alpha - 2}.
\end{equation}
This verifies Condition (1) in Theorem \ref{Asymptotic}.

By \eqref{eq H product} and \eqref{Fourier}, we obtain the following representation for the correlation function:
\begin{equation}\label{rho1:1}
\begin{split}
\rho_1(x,y)
&= \frac{c_{H}}{2\|u(t,x)\|_{L^2(\Omega)}^2}
\int_{\mathbb{R}}\left(1-e^{-2t|\xi|^\alpha}\right) e^{-i(x-y)\xi} |\xi|^{1-2H-\alpha} d\xi \\
&= \frac{c_{H}}{\|u(t,x)\|_{L^2(\Omega)}^2}
\int_0^\infty \left(1-e^{-2t\xi^\alpha}\right) \xi^{1-2H-\alpha} \cos\left(\xi|x-y|\right) d\xi.
\end{split}
\end{equation}

Let
\begin{equation*}
h(\xi) := \left(1 - e^{-2t\xi^\alpha}\right)\xi^{1-2H-\alpha}\ \ \text{ for } \xi>0.
\end{equation*}
We  claim that
\begin{equation}\label{claim}
\int_0^\infty h(\xi) \cos\left(\xi|x-y|\right) d\xi = O\left(|x-y|^{\frac{H-1}{2}}\right), \quad \text{as} \quad |x-y|\to\infty.
\end{equation}
This asymptotic estimate verifies Condition (2) in Theorem~\ref{Asymptotic}. The proof of~\eqref{claim} will be presented  later; we first address the estimate in~\eqref{u/log}.

For a fixed parameter $\lambda > 0$, consider the function
\[
\phi_{\lambda}(x) := \sqrt{\lambda\log x}\ \ \ \text{for } x>0.
\] 
Let $I(\phi_{\lambda})$ denote the integral defined in \eqref{eq inte test}. We can prove the following dichotomy:
\begin{itemize}
\item If $\lambda > 2$, then $I(\phi_{\lambda}) < \infty$;
\item If $\lambda \leq 2$, then $I(\phi_{\lambda}) = \infty$.
\end{itemize}
An application of Theorem \ref{Asymptotic} consequently yields \eqref{u/log}.

It remains to verify the claim \eqref{claim}.  To do this,
 we divide the integral into two parts:
\begin{equation}\label{rho1:2}
\begin{split}
& \int_0^\infty h(\xi)  \cos\left(\xi|x-y|\right)d\xi \\
=&\, \int_0^{|x-y|^{-\frac{1}{4}}}h(\xi)\cos\left(\xi|x-y|\right)d\xi+
\int_{|x-y|^{-\frac{1}{4}}}^\infty h(\xi)\cos\left(\xi|x-y|\right)  d\xi \\
=&: J_1+J_2.
\end{split}
\end{equation}
Using the elementary inequality
\begin{equation}\label{e}
1-e^{-x}\leq x\ \ \text{for any}\   x\geq0,
\end{equation}
we have
\begin{equation}\label{rho1:3}
J_1\leq2t\int_0^{|x-y|^{-\frac{1}{4}}}\xi^{1-2H}d\xi=\frac{t}{1-H} |x-y|^{\frac{H-1}{2}}.
\end{equation}
Using the integration by parts formula and \eqref{e}, we have
\begin{equation}\label{rho1:4}
\begin{split}
J_2=&\, |x-y|^{-1}\int_{|x-y|^{-\frac{1}{4}}}^\infty h(\xi)d(\sin(\xi|x-y|)) \\
=&\, |x-y|^{-1}\left[
h(\xi)\sin(\xi|x-y|)\Big|_{|x-y|^{-\frac{1}{4}}}^\infty
   -\int_{|x-y|^{-\frac{1}{4}}}^\infty\sin(\xi|x-y|)d(h(\xi))\right] \\
\leq&\, |x-y|^{-1}\left[h\left(|x-y|^{-\frac{1}{4}}\right)+\int_{|x-y|^{-\frac{1}{4}}}^\infty |h'(\xi)|d\xi\right] \\
\leq&\, 2t|x-y|^{\frac{2H-5}{4}}+|x-y|^{-1}\int_{|x-y|^{-\frac{1}{4}}}^\infty |h'(\xi)|d\xi.
\end{split}
\end{equation}
Using the change of variables $\eta:=\sqrt{2t}\xi^{\frac{\alpha}{2}}$, we have
\begin{equation}\label{rho1:5}
\begin{split}
&\int_{|x-y|^{-\frac{1}{4}}}^\infty |h'(\xi)|d\xi\\
\leq&\,  2t\alpha\int_{|x-y|^{\frac{1}{4}}}^\infty e^{-2t\xi^\alpha}\xi^{-2H}d\xi+
     (2H+\alpha-1)\int_{|x-y|^{-\frac{1}{4}}}^\infty\left(1-e^{-2t\xi^\alpha}\right)\xi^{-2H-\alpha}d\xi \\
\leq&\,  2^{\frac{2H+2\alpha-1}{\alpha}} t^{\frac{2H+\alpha-1}{\alpha}}\int_0^\infty e^{-\eta^2} \eta^{\frac{2-4H-\alpha}{\alpha}}d\eta+(2H+\alpha-1)\int_{|x-y|^{-\frac{1}{4}}}^\infty\xi^{-2H-\alpha}d\xi \\
=&\, (2t)^{\frac{2H+\alpha-1}{\alpha}}\Gamma\left(\frac{1-2H}{\alpha}\right)+|x-y|^{\frac{2H+\alpha-1}{4}}.
\end{split}
\end{equation}
By \eqref{rho1:2}, \eqref{rho1:3}, \eqref{rho1:4}  and \eqref{rho1:5}, we have
\begin{align*}
&\int_0^\infty h(\xi) \cos\left(\xi|x-y|\right)d\xi\\
\leq&\, |x-y|^{\frac{H-1}{2}}\left(\frac{t}{1-H}+2t|x-y|^{-\frac34}
+(2t)^{\frac{2H+\alpha-1}{\alpha}}\Gamma\left(\frac{1-2H}{\alpha}\right)|x-y|^{-\frac{1+H}{2}}
+|x-y|^{-\frac{3-\alpha}{4}}\right),
\end{align*}
which implies \eqref{claim}.
The proof   is complete.
\end{proof}

\subsection{Proof of  Theorem  \ref{Thm3.2}}
\begin{proof}[Proof of  Theorem  \ref{Thm3.2}] 
The proof of Theorem \ref{Thm3.2} follows the approach of  Theorem 1.2 in \cite{HW2022} and   Theorem \ref{Thm3.1}. It relies on       Talagrand's majorizing measure theorem and Sudakov's minoration theorem.  The main steps are outlined below.

  For a fixed $L \geq t^{1/\alpha}$,  consider the partition of $[-L, L]$  introduced  in \eqref{partition:L}. Using    \eqref{U-h-Eu-3},   the diameter of $A_n(x)$ under the metric $d_{2,t,h}(x,y)$ defined in \eqref{eq d2}  satisfies 
\[
\diam(A_n(t,x))\le c_{2,6}h^{\theta}\lc (2^{-2^{n-2}}L)^{\frac{2H+\alpha-2-2\theta}{2}}\wedge t^{\frac{2H+\alpha-2-2\theta}{2\alpha}}\rc.
\]

 Let
 \begin{equation}\label{def:N1}
 N_1:=\inf\left\{n\geq2: 2^{-2^{n-2}}\leq \frac{ t^{\frac{1}{\alpha}} } { L}\wedge\frac12\right\}.
 \end{equation}

 Applying Theorem~\ref{Talagrand} in the same way as in the proofs of Theorem~\ref{Thm3.1} and   \eqref{N0-upper}, we obtain that for all  $\theta\in\Big[0,\frac{2H+\alpha-2}{2}\Big]$,
\begin{equation}\label{U-h-Eu}
\begin{split}
  \EE\lk\sup_{x\in\bL}\Delta_h u(t,x)\rk 
\leq&\, c_{3,1}\sup_{x\in\bL}\sum_{n=0}^{N_1}2^{\frac{n}{2}}\diam(A_n(t,x))
    +c_{3,1}\sup_{x\in\bL}\sum_{n= N_1+1}^{\infty}2^{\frac{n}{2}} \diam(A_n(t,x)) \\
\leq&\,  c_{3,1}c_{2,6}h^{\theta}t^{\frac{2H+\alpha-2-2\theta}{2\alpha}}
\left[ \sum_{n=0}^{N_1}2^{\frac{n}{2}}+\sum_{n=N_1+1}^{\infty}2^{\frac{n}{2}}
\left( \frac{2^{2^{N_1-2}}}{2^{2^{n-2}}}\right)^{\frac{2H+\alpha-2-2\theta}{2}}  \right] \\
\leq&\, c_{3,4}h^{\theta}t^{\frac{2H+\alpha-2-2\theta}{2\alpha}}\cdot2^{\frac{N_1}{2}} \\
\leq&\, 2^{\frac32}c_{3,4}h^{\theta}t^{\frac{2H+\alpha-2-2\theta}{2\alpha}}\Psi(t,L).
\end{split}
\end{equation}

Consider the interval $[-L, L]$ with $L \geq t^{1/\alpha}$.  We select the grid points
\[
x_j = j t^{1/\alpha}  \quad \text{for} \quad j = 0, \pm 1, \ldots, \pm\left\lfloor L/t^{1/\alpha} \right\rfloor.
\]
Applying Theorem~\ref{Sudakov} as in~\eqref{lower-Eu-1}, we obtain the lower bound for all  $0 < h \leq \frac{3}{64}t^{1/\alpha}$,
\begin{equation}\label{L-h-Eu}
\mathbb{E}\left[\sup_{-L \leq x \leq L} \Delta_h u(t,x)\right] \geq \frac{1}{2}c_{A,1}c_{2,7}h^{\frac{2H+\alpha-2}{2}}\Psi(t,L).
\end{equation}
Combining the upper bound~\eqref{U-h-Eu} with the lower bound~\eqref{L-h-Eu} yields the desired result~\eqref{Thm3.2-1}. The  proof is complete.
\end{proof}

\subsection{Proof of  Proposition \ref{cor2}}
\begin{proof}[Proof of  Proposition \ref{cor2}] 
  The result follows the argument outlined in the proof of Proposition \ref{cor1}; the key steps are summarized below. 
  
\noindent{\bf Step 1.}
Fix an arbitrary $\varepsilon>0$ and  suppose that   $\beta>\frac{2c_{2,2}^2\log 2 }{c_{1,5}^2\varepsilon^2}$. Denote
  $\bL^\beta(n) :=\left[-n^\beta,n^\beta\right]$.
By \eqref{Thm3.2-1} and \eqref{equ-metric-d1},
we have that for any $0<h\leq \frac3{64}t^{\frac{1}{\alpha}}$,
\begin{align*}
\EE\left[\sup_{x\in\bL^\beta(n)}\Delta_hu(t,x)\right]
\geq c_{1,5}h^{\frac{2H+\alpha-2}{2}}\lc 1+\sqrt{\log_2\lc\frac{n^\beta}{t^{1/\alpha}}\rc}\rc,
\end{align*}
and
\begin{align*}
     \sup_{x\in\bL^\beta(n)}\EE\left[|\Delta_hu (t,x)|^2\right]
     \leq c_{2,2}^2 h^{2H+\alpha-2}.
\end{align*}
Consequently,  combining Borell's inequality \eqref{lem3} with the Borel-Cantelli lemma and applying the estimate \eqref{Thm3.2-1}, we establish \eqref{Thm3.2-2}.

\noindent {\bf Step 2.}  Let $\rho_2(x,y)$ be  the correlation function   between $\Delta_h u(t,x)$ and $\Delta_h u(t,y)$, that is,
 $$\rho_2(x,y):=\frac{\mathbb E[\Delta_h u(t,x) \Delta_h u(t,y)]}{ \sqrt{\mathbb E[\Delta_h u(t,x)^2] \cdot \mathbb E[\Delta_h u(t,y)^2] }}.$$

Proceeding similarly to the proof of \eqref{rho1:bounds} and applying Lemma \ref{lem 2space}, we can find some positive constants $c_{3,5}, c_{3,6}$ and $c_{3,7}$ such that  for all $0<|x-y|<c_{3,5}h$,
 \begin{equation*}
1-\frac{c_{3,6}}{\|\Delta_hu(t,x)\|_{L^2(\Omega)}^2}|x-y|^{2H+\alpha-2}
\leq \rho_2(x,y)\leq
1-\frac{c_{3,7}}{\|\Delta_hu(t,x)\|_{L^2(\Omega)}^2}|x-y|^{2H+\alpha-2}.
\end{equation*}
 This verifies Condition $(1)$ in Theorem \ref{Asymptotic}.

By \eqref{eq H product}, \eqref{Fourier}, and \eqref{claim}, we have
 \begin{equation*}
 \begin{split}
\rho_2(x,y)
=&\, \frac{c_{H}}{\|\Delta_hu(t,x)\|_{L^2(\Omega)}^2}
\int_{\RR}\left( 1-e^{-2t|\xi|^\alpha}\right)
\left(1-\cos(h\xi)\right)|\xi|^{1-2H-\alpha}\cos(\xi|x-y|)d\xi \\
\leq&\, \frac{4c_{H}}{\|\Delta_hu(t,x)\|_{L^2(\Omega)}^2}
\int_{0}^\infty\left( 1-e^{-2t\xi^\alpha}\right)\xi^{1-2H-\alpha}\cos(\xi|x-y|)d\xi \\
=&\, O\left(|x-y|^{\frac{H-1}{2}}\right),\ \ \text{as}\ \ |x-y|\rightarrow\infty.
\end{split}
\end{equation*}
This verifies Condition (2) in Theorem \ref{Asymptotic}. The same argument as in  Proposition \ref{cor1}(c)   gives \eqref{u:h/log}. The proof is complete.
\end{proof}

\subsection{Proof of  Theorem \ref{Thm3.3}}
\begin{proof}[Proof of  Theorem \ref{Thm3.3}]
The proof of Theorem~\ref{Thm3.3} follows the strategy of  Theorem \ref{Thm3.1} and  \cite[Theorem 1.3]{HW2022}, combining Talagrand's majorizing measure theorem with Sudakov's minoration theorem. The main steps are outlined below.

For a fixed $L>0$, consider the partition of $[-L, L]$ introduced in \eqref{partition:L}. Under the metric $d_{3,t,\tau}(x,y)$ defined in \eqref{eq:d3}, the diameter of $A_n(x)$ is bounded  via Lemma \ref{lem:diff3}  as follows: 
\[
\diam(A_n(t,x))\le c_{2,8}\tau^{\frac{\theta}{2}}\lc (2^{-2^{n-2}}L)^{\frac{2H+\alpha-2-\alpha\theta}{2}}\wedge t^{\frac{2H+\alpha-2-\alpha\theta}{2\alpha}}\rc.
\]

For  $0<\tau\leq t$,  we apply  Theorem~\ref{Talagrand} in the same way as in Theorem~\ref{Thm3.2} together with the bound from \eqref{N0-upper} to obtain
\begin{equation}\label{U-t-Eu}
\begin{split}
\EE\lk\sup_{x\in\bL}\mathcal D_{\tau} u(t,x)\rk
\leq&\, c_{3,1}\sup_{x\in\bL}\sum_{n=0}^{N_1}2^{\frac{n}{2}}\diam(A_n(t,x))
    +c_{3,1} \sup_{x\in\bL}\sum_{n= N_1+1}^{\infty}2^{\frac{n}{2}} \diam(A_n(t,x)) \\
\leq&\, c_{3,1}c_{2,8}\tau^{\frac{\theta}{2}}t^{\frac{2H+\alpha-2-\alpha\theta}{2\alpha}}
\lk \sum_{n=0}^{N_1}2^{\frac{n}{2}}+\sum_{n=N_1}^{\infty}2^{\frac{n}{2}}
\lc \frac{2^{2^{N_1-2}}}{2^{2^{n-2}}}\rc^{\frac{2H+\alpha-2-\alpha\theta}{2}}  \rk \\
\leq&\, c_{3,8}\tau^{\frac{\theta}{2}}t^{\frac{2H+\alpha-2-\alpha\theta}{2\alpha}}\cdot2^{\frac{N_1}{2}} \\
\leq& \, 2^{\frac32}c_{3,8}\tau^{\frac{\theta}{2}}t^{\frac{2H+\alpha-2-\alpha\theta}{2\alpha}}\Psi(t,L),
\end{split}
\end{equation}
where $N_1$ is given by \eqref{def:N1}.

Following the argument in the proof of \eqref{lower-Eu-1} and applying Theorem \ref{Sudakov}, we obtain that for all $0 < \tau \leq \left(\frac{3}{32}\right)^\alpha t$,
\beq\label{L-t-Eu}
\EE\left[\sup_{-L\leq x\leq L}\mathcal D_{\tau} u(t,x)\right] \geq \frac{1}{2} c_{A,1}c_{2,7}\tau^{\frac{2H+\alpha-2}{2\alpha}}\Psi(t,L).
\nneq
Combining  \eqref{U-t-Eu} and \eqref{L-t-Eu}, we have   \eqref{Thm3.3-1}.  The proof is complete.
\end{proof}

\subsection{Proof of  Proposition \ref{cor3}}
\begin{proof}[Proof of  Proposition \ref{cor3}] 
The proof follows the structure of Proposition \ref{cor1}; only the key steps are presented here.

\noindent{\bf Step 1.}
For any $\varepsilon>0$, assume $\beta>\frac{2c_{2,2}^2\log 2}{c_{1,9}^2\varepsilon^2}$. Denote $L^\beta(n)=n^\beta$.
By \eqref{Thm3.3-1} and \eqref{equ-metric-d1}, we have that for any
$0<\tau\leq \left(\frac3{32}\right)^\alpha t$,
 \begin{equation*}
\EE\left[\sup_{x\in\bL^\beta(n)}\mathcal D_{\tau} u(t,x)\right]
\geq c_{1,9}\tau^{\frac{2H+\alpha-2}{2\alpha}}\lc 1+\sqrt{\log_2\lc\frac{n^\beta}{t^{1/\alpha}}\rc}\rc,
 \end{equation*}
and
 \begin{equation*}
\sup_{x\in\bL^\beta(n)}\EE\left[|\mathcal D_{\tau} u (t,x)|^2\right]
     \leq c_{2,2}^2\tau^{\frac{2H+\alpha-2}{\alpha}}.
 \end{equation*}
Using Borell's inequality \eqref{lem3} and the Borel-Cantelli lemma with \eqref{Thm3.3-1}, we obtain \eqref{Thm3.3-2}.

\noindent{\bf Step 2.}
To prove \eqref{u:tau/log}, we   verify the conditions of Theorem~\ref{Asymptotic}.   The correlation function between $\mathcal{D}_{\tau} u(t,x)$ and $\mathcal{D}_{\tau} u(t,y)$ is given by
 $$\rho_3(x,y):=\frac{\mathbb E[\mathcal D_{\tau} u(t,x) \mathcal D_{\tau} u(t,y)]}{ \sqrt{\mathbb E[\mathcal D_{\tau} u(t,x)^2] \cdot \mathbb E[\mathcal D_{\tau} u(t,y)^2] }}.$$

 Applying the approach of \eqref{rho1:bounds} through Lemma~\ref{lem:diff3},   there exist some  positive constants $c_{3,9}$, $c_{3,10}$, and $c_{3,11}$ such that for all $x,y$ with $0 < |x-y| < c_{3,9}h$,
\begin{equation*}
1-\frac{c_{3,10}}{\|\mathcal D_{\tau}u(t,x)\|_{L^2(\Omega)}^2}|x-y|^{2H+\alpha-2}
\leq \rho_3(x,y)\leq
1-\frac{c_{3,11}}{\|\mathcal D_{\tau}u(t,x)\|_{L^2(\Omega)}^2}|x-y|^{2H+\alpha-2},
\end{equation*}
where $\|\mathcal D_{\tau}u(t,x)\|_{L^2(\Omega)}^2$ does not depend on $x$; see \eqref{t}.

By \eqref{eq H product} and \eqref{Fourier},  we have
\begin{equation*}
\begin{split}
\rho_3(x,y)
=&\, \frac{c_{H}}{2\|\mathcal D_{\tau}u(t,x)\|_{L^2(\Omega)}^2}
\int_{\RR}\left[ \left(1-e^{-2t|\xi|^\alpha}\right)\left(1-e^{-\tau|\xi|^\alpha}\right)^2
+\left( 1-e^{-2\tau|\xi|^\alpha}\right) \right] \\
&\ \ \ \ \ \ \ \ \ \ \ \ \ \ \ \ \ \ \ \ \ \ \ \ \ \ \
    \cdot|\xi|^{1-2H-\alpha}\cos(\xi|x-y|)d\xi \\
\leq&\, \frac{c_{H}}{\|\mathcal D_{\tau}u(t,x)\|_{L^2(\Omega)}^2}
\Bigg[\int_0^\infty\left(1-e^{-2t\xi^\alpha}\right)\xi^{1-2H-\alpha}\cos(\xi|x-y|)d\xi \\
&\ \ \ \ \ \ \ \ \ \ \ \ \ \ \ \ \ \ \ \ \ \ \ \ \ +\int_0^\infty\left(1-e^{-2\tau\xi^\alpha}\right)\xi^{1-2H-\alpha}\cos(\xi|x-y|)d\xi\Bigg] \\
=&\, O\left(|x-y|^{\frac{H-1}{2}}\right),\ \ \text{as}\ \ |x-y|\rightarrow\infty,
\end{split}
\end{equation*}
where  \eqref{claim} is used in the last step.

Having verified the conditions of Theorem \ref{Asymptotic}, the estimate \eqref{u:tau/log} follows    in parallel to the proof of Proposition \ref{cor1}(c). This completes the proof.
\end{proof}

\subsection{Proof of  Proposition \ref{E:sup:N}}
 \begin{proof}[Proof of  Proposition \ref{E:sup:N}]
To prove \eqref{E:sup:N:lower}, we adapt the approach of \cite[Proposition 3.11]{HW2022} and apply the Sudakov minoration theorem.

Consider the   probability density function defined as 
$$
\varrho(h):=c_{3,12}\left(
|h|^{-\frac{H}2-\frac{\alpha}4} {\mathbf 1}_{\left\{|h|\leq1\right\}}+|h|^{2H-2}{\mathbf 1}_{\left\{|h|\geq1\right\}}\right),
$$
where $c_{3,12}=2^{-1}\left(\frac4{4-2H-\alpha}+\frac1{1-2H}\right)^{-1}$.
Using  the Cauchy-Schwarz inequality, we have
\begin{equation}\label{cauchy}
\begin{split}
\left|\int_{\RR} \Delta_h u(t,x) \varrho(h)dh\right|
\leq &\,
\left( \int_{\RR} |\Delta_h u(t,x)|^2\cdot |h|^{2H-2}dh\right)^{\frac12}\cdot
\left( \int_{\RR} \varrho^2(h)|h|^{2-2H} dh\right)^{\frac12}\\
=&\, c_{3,13}^{\frac12} \mathcal N_{\frac12-H}u(t,x),
\end{split}
\end{equation}
where $c_{3,13}=2c_{3,12}^2\left(\frac2{6-6H-\alpha}+\frac1{1-2H}\right)$.
This, together with Jensen's inequality, implies that
\begin{equation}\label{N:lower}
\begin{split}
\mathbb E\left[\sup_{-L\leq x \leq L}\mathcal N^2_{\frac12-H}u(t,x)\right]
\geq &\,
c_{3,13}^{-1}\mathbb E\left[ \left( \sup_{-L\leq x \leq L}\int_{\RR} \Delta_h u(t,x) \varrho(h)dh\right)^2\right]\\
\geq &\,
c_{3,13}^{-1}\left( \mathbb E\left[ \sup_{-L\leq x \leq L}\int_{\RR}\Delta_h u(t,x)\varrho(h)dh\right]\right)^2.
\end{split}
\end{equation}
Denote
\begin{equation*}
u_{\varrho}(t,x):= \, \int_{\RR} \Delta_h u(t,x) \varrho(h)dh.
\end{equation*}
By stochastic Fubini's theorem (see, e.g., \cite[Corollary 2.9]{K14}), we have 
\begin{equation*}
u_{\varrho}(t,x)=  \, \int_0^t\int_{\RR}\left( \int_{\RR} [G_\alpha(t-s,x+h-z)-G_\alpha(t-s,x-z)]\varrho(h)dh\right)W(ds,dz).
\end{equation*} 
By  \eqref{eq H product} and  \eqref{Fourier}, we have
\begin{equation*}
\begin{split}
\mathbb E\left[u^2_{\varrho}(t,x) \right]
=&\, c_{H}\int_0^t \int_{\RR}\left| \mathcal F \left(\int_{\RR}[G_\alpha(t-s,x+h-\cdot)-G_\alpha(t-s,x-\cdot)]
 \varrho(h)dh \right)(\xi)\right|^2 |\xi|^{1-2H}d\xi ds\\
=&\, c_{H}\int_0^t \int_{\RR}\left| \int_{\RR}\left[
 e^{-(t-s)|\xi|^\alpha-i(x+h)\xi}-e^{-(t-s)|\xi|^\alpha-ix\xi} \right]\varrho(h)dh\right|^2|\xi|^{1-2H}d\xi ds\\
=&\, c_{H}\int_0^\infty\left(1-e^{-2t\xi^\alpha}\right)\left( \int_{\RR}(1-\cos(h\xi))\varrho(h)dh\right)^2
 \xi^{1-2H-\alpha}d\xi\\
\leq&\, 4c_{H}\int_0^\infty\left(1-e^{-2t\xi^\alpha}\right)\xi^{1-2H-\alpha}d\xi.
\end{split}
\end{equation*}
The last integral is   convergent since  $\alpha+2H>2$, which guarantees that   $u_{\varrho}(t,x)$ is a well-defined Gaussian random field.

Next, we establish the lower  bound for the canonical metric $d_{t,\varrho}$, which is  defined by
\begin{equation}
d_{t,\varrho}:=\left( \mathbb E\left[ \left|u_{\varrho}(t,x)-u_{\varrho}(t,y)\right|^2\right]\right)^{\frac12}  \, \text{ for  }  |x-y|\geq t^{\frac1\alpha}.
\end{equation}  
The argument follows the approach used  in \eqref{lower-Eu-1}. 

Using \eqref{eq H product} and \eqref{Fourier}, we obtain
\begin{equation}\label{d4}
\begin{split}
 d^2_{t,\varrho}
=&\, c_{H}\int_0^t \int_{\RR}\Big| \mathcal F \Big(\int_{\RR}[G_\alpha(t-s,x+h-\cdot)-G_\alpha(t-s,x-\cdot)\\
&\,\ \ \ \ \ \ \ \ \ \ \ \ \ \    -G_\alpha(t-s,y+h-\cdot)+G_\alpha(t-s,y-\cdot)]
 \varrho(h)dh \Big)(\xi)\Big|^2 |\xi|^{1-2H}d\xi ds\\
=&\, c_{H}\int_0^t \int_{\RR}\left| \int_{\RR}
e^{-(t-s)|\xi|^\alpha}e^{-ix\xi}\left(1-e^{-i(y-x)\xi}\right)\left(e^{-ih\xi}-1\right)\varrho(h)dh\right|^2
|\xi|^{1-2H}d\xi ds\\
=&\, 4c_{H}\int_0^\infty\left(1-e^{-2t\xi^\alpha}\right)(1-\cos(|x-y|\xi))
\left(\int_0^\infty(1-\cos(h\xi))\varrho(h)dh\right)^2\xi^{1-2H-\alpha}d\xi.
\end{split}
\end{equation}
When $\xi\geq1$, by \eqref{eq elem}, we have
\begin{equation}\label{d4-2}
\begin{split}
\int_0^\infty(1-\cos(h\xi))\varrho(h)dh
\geq&\, c_{3,12}\int_0^1 (1-\cos(h\xi))h^{-\frac{H}2-\frac\alpha4}dh\\
=&\, c_{3,12}\xi^{\frac{H}2+\frac\alpha4-1}\int_0^{\xi}(1-\cos(h))h^{-\frac{H}2-\frac\alpha4}dh\\
\geq&\, c_{3,12}\xi^{\frac{H}2+\frac\alpha4-1}\int_0^1(1-\cos(h))h^{-\frac{H}2-\frac\alpha4}dh\\
\geq&\, \frac{c_{3,12}}{4}\xi^{\frac{H}2+\frac\alpha4-1}\int_0^1h^{-\frac{H}2-\frac\alpha4+2}dh\\
=&\, \frac{c_{3,12}}{12-2H-\alpha}\xi^{\frac{H}2+\frac\alpha4-1}.
\end{split}
\end{equation}
This, together with \eqref{d4}, implies that
\begin{equation}\label{d4:lower}
\begin{split}
d^2_{t,\varrho}
 \geq&\,  \frac{4c_{H}c_{3,12}^2}{(12-2H-\alpha)^2}\left(1-e^{-2t}\right)
\int_1^\infty (1-\cos(|x-y|\xi))\xi^{-H-\frac{\alpha}2-1}d\xi\\
=&\, \frac{4c_{H}c_{3,12}^2}{(12-2H-\alpha)^2}\left(1-e^{-2t}\right)|x-y|^{H+\frac{\alpha}2}
    \int_{|x-y|}^\infty(1-\cos(\xi))
      \xi^{-H-\frac{\alpha}2-1}d\xi.\\
\end{split}
\end{equation}
Define
\begin{equation}\label{k3}
 k_3:=\left\lfloor \frac{ |x-y|-\frac{\pi}{2}}{2\pi}\right\rfloor+1.
\end{equation}
Then, there exists a positive constant $c_{3,13}$ such that
\begin{equation}\label{k3-1}
\begin{split}
\int_{|x-y|}^\infty(1-\cos(\xi))
      \xi^{-H-\frac{\alpha}2-1}d\xi
\geq&\, \sum_{k=k_3}^\infty \int_{2k\pi+\frac{\pi}2}^{2k\pi+\frac{3\pi}2}
     (1-\cos(\xi))\xi^{-H-\frac{\alpha}2-1}d\xi  \\
\geq&\, \pi\sum_{k=k_3}^\infty\left(2k\pi+\frac{3\pi}2\right)^{-H-\frac{\alpha}2-1}\\
\ge &\, c_{3,13}\left(2k_3\pi+\frac{3\pi}2\right)^{-H-\frac{\alpha}2}\\
\geq&\,  c_{3,13}\left(|x-y|+3\pi\right)^{-H-\frac{\alpha}2},
\end{split}
\end{equation}
where the second inequality holds because   $\cos(\xi)$ is negative on the intervals  $ \big[2k\pi+\frac{\pi}{2},2k\pi+\frac{3\pi}{2}\big]$ and $\xi^{-H-\frac{\alpha}2-1}$ is a decreasing function on $(0,\infty)$.

Combining \eqref{k3-1} with \eqref{d4:lower},  we have   that for any $|x-y|\geq t^{\frac1\alpha}$,
\begin{equation}\label{d4:low}
\begin{split}
d^2_{t,\varrho}  \geq &\,  \frac{4c_{H} c_{3,13}c_{3,12}^2}{(12-2H-\alpha)^2}\left(1-e^{-2t}\right) \left( \frac1 {1+\frac{3\pi}{|x-y|}}\right)^{H+\frac{\alpha}2}\\
\geq &\, \frac{4c_{H} c_{3,13}c_{3,12}^2}{(12-2H-\alpha)^2}\left(1-e^{-2t}\right)  \left( \frac{ t^{\frac1\alpha} } {t^{\frac1\alpha}+3\pi}\right)^{H+\frac{\alpha}2}.
\end{split}
\end{equation}

For the   interval $[-L, L]$, where  $L \geq t^{1/\alpha}$, we define a set of  grid points
\[
x_j = j t^{1/\alpha}  \quad \text{for} \quad j = 0, \pm 1, \ldots, \pm\left\lfloor L/t^{1/\alpha} \right\rfloor.
\]
Using \eqref{d4:low} and  Theorem~\ref{Sudakov} as in~\eqref{lower-Eu-1}, there exists a positive constant $c_{t}$ depending on $t$ such that
\begin{equation}\label{E:h}
\mathbb E\left[ \sup_{-L\leq x\leq L}\int_{\RR}\Delta_h u(t,x)\varrho(h)dh\right]\geq c_t\sqrt{\log_2 L}.
\end{equation}
Combining with    \eqref{N:lower} and \eqref{E:h}, we obtain \eqref{E:sup:N}.
The proof is complete.
 \end{proof}

\vskip 0.5cm
{\bf Acknowledgments} \quad The authors are grateful to the anonymous referees for their constructive comments and corrections, which have led to significant improvement of this paper.  
\vskip 0.5cm
{\bf Conflict of Interest}\quad The authors declare no conflict of interest.

\vskip 0.5cm

\renewcommand{\thesection}{A}  % ¸½Â¼½Ú±êÌâÏÔÊ¾Îª A
\renewcommand{\thelemma}{A.\arabic{lemma}}  % ÒýÀí±àºÅ A.1, A.2, ...
\renewcommand{\theequation}{A.\arabic{equation}}  % ¹«Ê½±àºÅ (A.1), (A.2), ...
\setcounter{equation}{0}  % ÖØÖÃ¹«Ê½¼ÆÊýÆ÷´Ó1¿ªÊ¼
\setcounter{theorem}{0}  % ÖØÖÃÒýÀí¼ÆÊýÆ÷´Ó1¿ªÊ¼
\section*{Appendix}  % ×Ô¶¯ÏÔÊ¾Îª Appendix A

 Let $\mathbb T$ be an index set and $\{X_t\}_{t\in \mathbb T}$ a centered Gaussian process. Define the canonical metric $d$   by
 	 $$d(t,s)=\sqrt{\EE\left[|X_t-X_s|^2\right]}.$$
\begin{theorem} {\rm (Talagrand's majorizing measure theorem, \cite[Theorem 2.4.1]{Talagrand2014})} \label{Talagrand} 
  There exists a constant $c>1$ such that  
 	\begin{equation}\label{lem1}
 	c^{-1}\inf_\cA \sup_{t\in \mathbb T} \sum_{n\geq 0}2^{\frac{n}{2}} \diam(A_n(t))\le \, \EE\Blk \sup_{t\in \mathbb T} X_t \Brk\leq\, c\inf_\cA \sup_{t\in \mathbb T} \sum_{n\geq 0}2^{\frac{n}{2}} \diam(A_n(t)),
 	\end{equation}
 	where   the infimum is taken over all increasing sequence $\cA:=\{\cA_n, n=1, 2, \cdots\}  $ of partitions of $\mathbb T$ such that $\#\cA_n  \leq 2^{2^n}$ ($\#A$ denotes the number of elements in the  set $A$),   $A_n(t)$ denotes the unique element of $\cA_n$ that contains $t$, and $\diam(A_n(t))$ is the diameter  of $A_n(t)$ with respect to the  metric $d$.
 \end{theorem}

\begin{theorem}{\rm (Sudakov's minoration theorem, \cite[Lemma 2.4.2]{Talagrand2014})} \label{Sudakov}
Let $\{X_{t_i},i=1,\cdots,n\}$ be a centered Gaussian family with natural distance $d$ and assume that
\[
\forall p,q \leq n,~p\neq q \Rightarrow d(t_p,t_q)\geq \delta.
\]
Then, we have
\begin{equation}\label{lem2}
\EE\left[\sup_{1\leq i\leq n} X_{t_i} \right] \geq  c_{A,1} \delta   \sqrt{\log_2(n)},
\end{equation}
where $c_{A,1}$ is a universal constant.
\end{theorem}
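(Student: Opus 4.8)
The plan is to bound $\mathcal N_{\frac12-H}u(t,x)$ from below by a scalar Gaussian field obtained by integrating the spatial increments against a suitable test density, and then to apply Sudakov's minoration (Theorem~\ref{Sudakov}) to that field. First I would fix a probability density $\varrho$ on $\RR$ whose behaviour is $|h|^{-H/2-\alpha/4}$ near the origin and $|h|^{2H-2}$ at infinity; the near-origin exponent is chosen so that $\int_\RR\varrho^2(h)|h|^{2-2H}\,dh<\infty$ while the low-frequency contribution $\int_0^\infty(1-\cos(h\xi))\varrho(h)\,dh$ still grows like a positive power of $\xi$ for large $\xi$. By the Cauchy--Schwarz inequality,
\[
\Bigl|\int_\RR \Delta_h u(t,x)\,\varrho(h)\,dh\Bigr|
\le \mathcal N_{\frac12-H}u(t,x)\cdot\Bigl(\int_\RR\varrho^2(h)|h|^{2-2H}\,dh\Bigr)^{1/2},
\]
so that, writing $u_\varrho(t,x):=\int_\RR\Delta_h u(t,x)\varrho(h)\,dh$, the seminorm dominates $|u_\varrho(t,x)|$ up to a constant. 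Taking the supremum over $x\in[-L,L]$ and then expectations, Jensen's inequality converts the problem into showing $\EE[\sup_{-L\le x\le L}u_\varrho(t,x)]\gtrsim\sqrt{\log_2 L}$.

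Next I would verify that $u_\varrho(t,x)$ is a genuine centered Gaussian field by computing $\EE[u_\varrho^2(t,x)]$ through the Fourier isometry \eqref{eq H product}--\eqref{Fourier}; the resulting spectral integral is controlled by $\int_0^\infty(1-e^{-2t\xi^\alpha})\xi^{1-2H-\alpha}\,d\xi$, which is finite precisely because $\alpha+2H>2$. The heart of the argument is a lower bound on the canonical metric $d_{4,t,\varrho}(x,y)=(\EE[|u_\varrho(t,x)-u_\varrho(t,y)|^2])^{1/2}$. Using the Fourier representation, $d_{4,t,\varrho}^2$ equals a spectral integral carrying the factor $(1-\cos(|x-y|\xi))\bigl(\int_0^\infty(1-\cos(h\xi))\varrho(h)\,dh\bigr)^2$. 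Restricting to $\xi\ge1$ and applying the elementary bound \eqref{eq elem} to the inner integral shows $\int_0^\infty(1-\cos(h\xi))\varrho(h)\,dh\gtrsim\xi^{H/2+\alpha/4-1}$, after which the outer integral is bounded below by summing $1-\cos\xi$ over the intervals $[2k\pi+\tfrac\pi2,2k\pi+\tfrac{3\pi}2]$ on which the integrand is bounded away from zero. This yields $d_{4,t,\varrho}(x,y)\ge c_t>0$ uniformly for all $x,y$ with $|x-y|\ge t^{1/\alpha}$.

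Finally I would place the grid points $x_j=jt^{1/\alpha}$, $j=0,\pm1,\dots,\pm\lfloor L/t^{1/\alpha}\rfloor$, which are mutually separated by at least $c_t$ in the $d_{4,t,\varrho}$ metric. Sudakov's minoration (Theorem~\ref{Sudakov}) then gives $\EE[\sup_x u_\varrho(t,x)]\ge c_{A,1}c_t\sqrt{\log_2(2\lfloor L/t^{1/\alpha}\rfloor+1)}\gtrsim\sqrt{\log_2 L}$ for $L\ge t^{1/\alpha}$, and feeding this into the Jensen bound from the first step produces \eqref{E:sup:N:lower}. The main obstacle is the metric lower bound: the density $\varrho$ must simultaneously keep the normalizing integral $\int_\RR\varrho^2(h)|h|^{2-2H}\,dh$ finite and force the low-frequency weight $\int_0^\infty(1-\cos(h\xi))\varrho(h)\,dh$ to grow at the right polynomial rate in $\xi$, and the remaining oscillatory integral over $\xi$ must be handled by the dyadic summation so that the final constant $c_t$ is strictly positive and independent of $L$.
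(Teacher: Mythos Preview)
Your proposal does not address the stated theorem. Theorem~\ref{Sudakov} is Sudakov's minoration, a classical fact about arbitrary centered Gaussian families; the paper does not prove it but merely cites it from \cite{Talagrand2014}. What you have outlined is instead a proof of Proposition~\ref{E:sup:N} (the lower bound \eqref{E:sup:N:lower} on $\EE[\sup_x\mathcal N_{1/2-H}^2 u(t,x)]$), in which Sudakov's minoration is invoked as a black-box tool rather than established.

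If the intended target was Proposition~\ref{E:sup:N}, then your outline is correct and matches the paper's own argument essentially line for line: the same test density $\varrho(h)$ with exponent $-H/2-\alpha/4$ near the origin and $2H-2$ at infinity, the Cauchy--Schwarz and Jensen reduction to the auxiliary Gaussian field $u_\varrho$, the Fourier computation of $d_{4,t,\varrho}^2$, the lower bound on $\int_0^\infty(1-\cos(h\xi))\varrho(h)\,dh$ via \eqref{eq elem} for $\xi\ge1$, the summation over the intervals $[2k\pi+\tfrac\pi2,2k\pi+\tfrac{3\pi}2]$ where $1-\cos\xi$ is bounded below, and finally the grid $x_j=jt^{1/\alpha}$ together with Theorem~\ref{Sudakov}. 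No substantively different idea appears in either version.
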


\begin{theorem} { \rm (Borell's inequality, \cite[Theorem 2.1]{adler})}\label{Borell}
Let $\{X_t\}_{t\in \mathbb T}$  be a centered separable Gaussian process on some topological index set $\mathbb T$
with almost surely bounded sample paths. Then $\EE\left[\sup_{t\in  \mathbb T} X_t\right]<\infty$, and for all $\lambda>0$
	\begin{equation}\label{lem3}
	\bp\lt(\lt|\sup_{t\in \mathbb T} X_t-\EE\left[\sup_{t\in \mathbb T} X_t\right]\rt|>\lambda\rt)\leq 2\exp\lc-\frac {\lambda^2}{2\sigma^2}\rc,
	\end{equation}
	where $\sigma^2:=\sup_{t\in \mathbb T}\EE\left[X_t^2\right]$.
 \end{theorem}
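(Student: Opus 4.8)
The plan is to prove Borell's inequality by reducing it to a finite–dimensional Gaussian concentration estimate for Lipschitz functions, which is the genuine analytic core of the statement.

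First I would reduce to a finite index set. By separability there is a countable set $T_0=\{s_1,s_2,\ldots\}\subseteq T$ with $\sup_{t\in T}X_t=\sup_k X_{s_k}$ almost surely. Writing $T_n=\{s_1,\ldots,s_n\}$ and $M_n:=\sup_{t\in T_n}X_t$, the sequence $M_n$ increases to $M:=\sup_{t\in T}X_t$, which is finite a.s. by hypothesis. I would establish the bound \eqref{lem3} first for each finite $T_n$, where $m_n:=\EE[M_n]$ is automatically finite, and then let $n\to\infty$. The finite case gives concentration of $M_n$ around a median $\mu_n$ with uniform sub-Gaussian tails; since $\mu_n$ increases to a finite median of $M$, it is bounded, so $\{M_n\}$ is uniformly integrable, $m_n$ stays bounded, and monotone convergence yields $m_n\uparrow\EE[M]<\infty$. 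The tail bound then passes to the limit, proving both the finiteness claim and \eqref{lem3}.

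Next, for a fixed finite index set I would pass to an explicit Gaussian space. Writing the covariance matrix of $(X_{t_1},\ldots,X_{t_n})$ as $AA^\top$, one has $X_{t_i}=\langle a_i,g\rangle$ for a standard Gaussian vector $g$ on some $\RR^m$, where $a_i$ is the $i$-th row of $A$ and $\|a_i\|^2=\EE[X_{t_i}^2]\le\sigma_T^2$. The function $F(g):=\max_{1\le i\le n}\langle a_i,g\rangle$ then satisfies $F(g)-F(g')\le\max_i\langle a_i,g-g'\rangle\le\sigma_T\|g-g'\|$, and by symmetry $|F(g)-F(g')|\le\sigma_T\|g-g'\|$, so $F$ is $\sigma_T$-Lipschitz. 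The key analytic input is the Gaussian concentration inequality for Lipschitz functions: if $F:\RR^m\to\RR$ is $L$-Lipschitz and $g$ is standard Gaussian, then $\bp(|F(g)-\EE F(g)|>\lambda)\le 2e^{-\lambda^2/(2L^2)}$. I would prove this through the Gaussian logarithmic Sobolev inequality together with Herbst's argument, since this route centers the deviation exactly at the mean and produces the constant in \eqref{lem3}: after mollifying $F$ (convolution with a Gaussian density preserves the Lipschitz constant and makes $F$ smooth with $\|\nabla F\|\le L$), apply log-Sobolev to $e^{\theta F/2}$ to obtain a differential inequality for $\psi(\theta)=\log\EE e^{\theta(F-\EE F)}$, yielding $\psi(\theta)\le L^2\theta^2/2$; a Chernoff bound on each tail and a union bound give the factor $2$. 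Applying this with $L=\sigma_T$ and letting the mollification parameter vanish completes the finite case.

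The main obstacle is the Gaussian concentration inequality itself, that is the passage from the $L$-Lipschitz property of $F$ to sub-Gaussian control of its fluctuations with the sharp variance proxy $L^2$. Centering at the mean rather than the median—which the Gaussian isoperimetric route would deliver more naturally—is the delicate point, and the log-Sobolev--Herbst scheme is precisely what secures the exponent $\lambda^2/(2\sigma_T^2)$ in \eqref{lem3}. The remaining ingredients, namely separability, the finite-dimensional representation, and the finite-to-infinite limit, are routine but must be arranged so that the finiteness of $\EE[\sup_{t\in T}X_t]$ is genuinely derived from the uniform tails rather than assumed.
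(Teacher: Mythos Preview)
Your proposal is a correct and standard proof of Borell's inequality: the reduction to finite index sets via separability, the representation of the finite-dimensional supremum as a $\sigma_T$-Lipschitz function of a standard Gaussian vector, and the appeal to Gaussian concentration via log-Sobolev and Herbst's argument are all sound, and you handle the passage from median to mean and the uniform-integrability issue with appropriate care.

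However, there is nothing to compare against: the paper does not prove this theorem. It is stated in the appendix as a cited result from Adler's monograph and is used as a black box in the proofs of Propositions~\ref{cor1}, \ref{cor2}, and \ref{cor3}. So your proof is not the same as the paper's, nor different from it---the paper simply quotes the statement and moves on.
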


\begin{theorem}{\rm (\cite[Theorems 1.1, 2.1 and 2.2]{QW1971})}\label{Asymptotic}
Let $\{X(t)\}_{t\in\mathbb{R}}$ be a real-valued separable stationary Gaussian process with $\EE[X(t)] \equiv 0$ and $\Var[X(t)] \equiv 1$. Assume that  its correlation function $\rho(s,t) = \EE[X(s)X(t)]$ is continuous and  satisfies the following conditions:
\begin{enumerate}
\item[(1)]
There are some  positive constants $\delta, c_{A,2}, c_{A,3}, T$, and $\alpha\in (0,2]$ such that
$$
1-c_{A,2}h^\alpha \leq \rho(t,t+h) \leq 1-c_{A,3} h^\alpha,\ \ \text{for all}\ \ 0\le h\le \delta, \  \ t\ge T;
$$
\item[(2)]
There exists a constant $\gamma>0$ such that
$$
\rho(t,t+h)=O(h^{-\gamma}),\ \ \text{as}\ \ h\rightarrow\infty.
$$
\end{enumerate}
Then, for every   positive  non-decreasing function $\phi(t): [a,\infty)\rightarrow \mathbb R_+$,
$$
 \bp\Big( \exists t_0(\omega)>a: X(t)<\phi(t)\ \ \text{for all}\ \ t\geq t_0(\omega)\Big)=1\ \ \text{or}\ \ 0,
$$
according as the integral
\begin{align}\label{eq inte test}
I(\phi):=\int_a^\infty {\phi(t)}^{2/\alpha-1}\exp\left( -\phi(t)^2/2\right)dt
\end{align}
converges or diverges. 
\end{theorem}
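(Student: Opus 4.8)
The statement is the classical Qualls--Watanabe integral test, and the plan is to prove it by combining sharp tail asymptotics for the supremum over short intervals (a Pickands-type estimate) with a two-sided Borel--Cantelli argument, while extracting the $0$--$1$ dichotomy from the mixing property guaranteed by Condition (2). First I would record the exact shape of the integrand: writing $\Psi(u):=\bp(Z>u)\sim(u\sqrt{2\pi})^{-1}e^{-u^2/2}$ for a standard normal $Z$, the quantity $u^{2/\alpha}\Psi(u)$ is comparable to $u^{2/\alpha-1}e^{-u^2/2}$, which is precisely the integrand of $I(\phi)$ evaluated at $u=\phi(t)$. The engine of the whole argument is therefore the estimate that, for a fixed interval length $T_0$ and all $t\ge T$,
\[
c\,T_0\,u^{2/\alpha}\Psi(u)\le \bp\Big(\sup_{t\le s\le t+T_0}X(s)>u\Big)\le C\,T_0\,u^{2/\alpha}\Psi(u),\qquad u\to\infty .
\]
Here the two sides of Condition (1) play complementary roles: the lower bound $\rho\ge 1-c_{A,2}h^\alpha$ caps the local independence and yields the upper tail bound, while the upper bound $\rho\le 1-c_{A,3}h^\alpha$ forces enough independent oscillations and yields the lower tail bound. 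Only the order of magnitude is needed, not an exact Pickands constant, so I would prove this by the standard double-sum method, discretizing at mesh $\sim u^{-2/\alpha}$ and comparing with a stationary model whose covariance behaves like $1-c|h|^\alpha$.

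For the convergence half ($I(\phi)<\infty\Rightarrow$ probability $1$), I would partition $[a,\infty)$ into unit intervals $I_k=[k,k+1)$ and set $E_k=\{\sup_{s\in I_k}X(s)>\phi(k)\}$. Since $\phi$ is nondecreasing, on $E_k^c$ one has $X(s)<\phi(k)\le\phi(s)$ for every $s\in I_k$. The upper tail estimate gives $\bp(E_k)\le C\,\phi(k)^{2/\alpha-1}e^{-\phi(k)^2/2}$; because $I(\phi)<\infty$ forces $\phi(t)\to\infty$ and the integrand $g(\phi)=\phi^{2/\alpha-1}e^{-\phi^2/2}$ is eventually decreasing, a standard integral comparison shows $\sum_k\bp(E_k)<\infty$ exactly when $I(\phi)<\infty$. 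The first Borel--Cantelli lemma then guarantees that a.s. only finitely many $E_k$ occur, whence $X(s)<\phi(s)$ for all large $s$, so the event has probability $1$.

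The divergence half ($I(\phi)=\infty\Rightarrow$ probability $0$) is the main obstacle and is where Condition (2) enters quantitatively. I would pass to a sparse subsequence of intervals $I_{k_j}$ whose mutual gaps grow, retaining $\sum_j\bp(E_{k_j})=\infty$ by the lower tail bound, and then control the pairwise dependence via Berman's normal comparison inequality: the discrepancy between a joint exceedance probability and the product of its marginals is dominated by $\sum_{i\ne j}|\rho_{ij}|\exp\!\big(-u^2/(1+|\rho_{ij}|)\big)$, and $\rho(t,t+h)=O(h^{-\gamma})$ makes this error negligible against the main term for a suitably spread-out subsequence. With $\sum_j\bp(E_{k_j})=\infty$ and this asymptotic independence in hand, the Kochen--Stone form of the second Borel--Cantelli lemma yields $\bp(\limsup_j E_{k_j})>0$, so $X(s)\ge\phi(s)$ happens infinitely often and the target event has probability $0$.

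Finally, the $0$--$1$ dichotomy is not an additional hypothesis but a consequence of the structure: the event $\{\exists t_0:\ X(t)<\phi(t)\ \forall t\ge t_0\}$ lies in the tail $\sigma$-field $\bigcap_s\sigma\big(X(t):t\ge s\big)$, and $\rho(t,t+h)\to0$ (implied by Condition (2)) makes the stationary Gaussian process mixing, hence its tail $\sigma$-field is trivial; this both legitimizes the ``probability $0$ or $1$'' assertion and upgrades $\bp(\limsup_jE_{k_j})>0$ to $=1$ in the divergence case. The two delicate points I anticipate are the uniformity in $t$ of the Pickands estimate and the precise choice of the sparse subsequence, which must simultaneously keep $\sum_j\bp(E_{k_j})=\infty$ and keep the Berman comparison error summable.
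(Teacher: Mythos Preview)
The paper does not prove this statement at all: Theorem~\ref{Asymptotic} is stated in the appendix as a quoted result from Qualls and Watanabe \cite{QW1971}, with no argument given, and is used only as a black box in the proofs of Propositions~\ref{cor1}(c), \ref{cor2}(b) and \ref{cor3}(b). So there is no ``paper's own proof'' to compare your proposal against.

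That said, your outline is essentially the classical proof as given in \cite{QW1971} and in standard treatments (Leadbetter--Lindgren--Rootz\'en, Piterbarg): the Pickands-type tail estimate $\bp(\sup_{[t,t+T_0]}X>u)\asymp T_0\,u^{2/\alpha}\Psi(u)$ obtained by discretizing at mesh $u^{-2/\alpha}$ and a double-sum bound, followed by a two-sided Borel--Cantelli argument where the divergence half uses Berman's normal comparison lemma together with the polynomial decay $\rho=O(h^{-\gamma})$. The tail-$\sigma$-field triviality via mixing to secure the $0$--$1$ dichotomy is also the standard device. The two points you flag as delicate---uniformity in $t$ of the Pickands estimate (here stationarity makes this automatic once $t\ge T$) and the construction of a sparse subsequence balancing $\sum_j\bp(E_{k_j})=\infty$ against summable comparison error---are exactly the places where the original paper spends its effort, so your assessment of where the work lies is accurate.
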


\end{document}